\newtheorem{theorem}{Theorem}
\newtheorem{lemma}[theorem]{Lemma}
\newtheorem{corollary}[theorem]{Corollary}
\newtheorem{proposition}[theorem]{Proposition}
\numberwithin{equation}{section}
\theoremstyle{definition}
\newcommand{\R}{\mathbb{R}}
\newcommand{\eps}{\varepsilon}
\newcommand{\nx}{\nabla_x}
\newcommand{\nv}{\nabla_v}
\newcommand{\omu}{\overline{\mu}}
\newcommand{\intRd}{\int_{\R^d}}
\newcommand{\rhoF}{\rho_F}
\newcommand{\mF}{m_F}
\newcommand{\A}{\mathsf{A}}
\newcommand{\T}{\mathsf{T}}
\renewcommand{\L}{\mathsf{L}}
\newcommand{\J}{\mathsf{J}}
\renewcommand{\H}{\mathsf{H}}
\newcommand{\D}{\mathsf{D}}
\newcommand{\Rot}{\mathsf{R}}
\newcommand{\U}{\mathsf{U}}
\newcommand{\nrm}[1]{\|#1\|}
\newcommand{\scalar}[2]{\langle #1,#2\rangle}
\newcommand{\BGprofile}{\mathfrak M}
\newcommand{\be}[1]{\begin{equation}\label{#1}}
\newcommand{\ee}{\end{equation}}
\newcommand{\diff}{\sigma}
\begin{document}
\title[Hypocoercivity in kinetic models]{Hypocoercivity for linear kinetic equations conserving mass}

\author[J. Dolbeault]{Jean Dolbeault}
\address{\hspace*{-12pt}Ceremade (UMR CNRS no. 7534), Universit\'e Paris-Dauphine, Place de Lattre de Tassigny, 75775 Paris Cedex 16, France}
\curraddr{}
\email{dolbeaul@ceremade.dauphine.fr}

\author[C. Mouhot]{Cl\'ement Mouhot}
\address{\hspace*{-12pt} DMA (UMR CNRS no. 8553), \'Ecole Normale Sup\'erieure, 45, rue d'Ulm F 75230 Paris cedex 05, France}
\curraddr{University of Cambridge, DAMTP, Centre for Mathematical Sciences, Wil\-berforce Road, Cambridge CB3 0WA, UK}
\email{Clement.Mouhot@ens.fr}

\author[C. Schmeiser]{Christian Schmeiser}
\address{\hspace*{-12pt}Fakult\"at f\"ur Mathematik, Universit\"at Wien, Nordbergstra{\ss}e 15, 1090 Wien, Austria}
\curraddr{}
\email{Christian.Schmeiser@univie.ac.at}
\thanks{}

\subjclass[2000]{Primary: 82C40. Secondary: 35B40, 35F10, 35H10, 35H99, 76P05}



\keywords{kinetic equations; hypocoercivity; Boltzmann; BGK; relaxation; diffusion limit; nonlinear diffusion; Fokker-Planck; confinement; spectral gap; Poincar\'e inequality; Hardy-Poincar\'e inequality}

\dedicatory{}

\begin{abstract} We develop a new method for proving hypocoercivity for a large class of linear kinetic equations with only one conservation law. Local mass conservation is assumed at the level of the collision kernel, while transport involves a confining potential, so that the solution relaxes towards a unique equilibrium state. Our goal is to evaluate in an appropriately weighted $L^2$ norm the exponential rate of convergence to the equilibrium. The method covers various models, ranging from diffusive kinetic equations like Vlasov-Fokker-Planck equations, to scattering models like the linear Boltzmann equation or models with time relaxation collision kernels corresponding to polytropic Gibbs equilibria, including the case of the linear Boltzmann model. In this last case and in the case of Vlasov-Fokker-Planck equations, any linear or superlinear growth of the potential is allowed. \end{abstract}

\maketitle
\thispagestyle{empty}

\section{Method, result and consequences}\label{Sec:Intro}

\subsection{Linear kinetic equations and hypocoercivity}\label{subsec:motivation}

We consider linear kinetic equations which can be written as
\be{eq:base}
\partial_t f + \T\,f = \L\,f\,, \quad f = f(t,x,v)\,, \quad (t,x,v)\in \R^+\times\R^d\times\R^d,
\ee
and describe the evolution of a \emph{distribution function} $f$. The \emph{transport operator}
\[
\label{eq:defT}
\T := v \cdot \nx - \nx V \cdot \nv
\]
has characteristics given on the \emph{phase space} $\R^d\times\R^d$ by the flow of the Hamiltonian
\[
(x,v)\mapsto E(x,v):=\frac 12\,|v|^2 + V(x)\,.
\]
The \emph{external potential} $V=V(x)$ is a measurable function on $\R^d$. The \emph{collision operator} $\L$ is independent of \emph{time} $t$ and acts as a multiplicator in the \emph{position} variable $x$. The variable $v$ is the \emph{velocity}.

We shall consider steady states which are in the intersection of the null spaces of $\T$ and $\L$ simultaneously. We shall assume that there exists a nonnegative energy profile function $\Gamma$ such that, for each fixed value of $x$, the nullspace $\mathcal N(\L)$ of $\L$ is spanned by $F(x,v):=\Gamma(E(x,v))$, so that
\[\label{Null-L}
\mathcal N(\L) = \{ f(x,v):\,\exists\,\phi(x) \mbox{ such that } f(x,v) = \phi(x)\,F(x,v) \}\,.
\]
Functions in $\mathcal N(\L)$ are \emph{local equilibria}; they depend on $x$ and $t$. The function $F$ is a \emph{global equilibrium} or \emph{global Gibbs state}. It is independent of $t$ (stationary) and isotropic with respect to $v$. Consistently, we shall further assume that $\L$ has rotational symmetry in $v$, i.e.~$\Rot_v\,\L = \L\,\Rot_v$ for any rotation operator~$\Rot_v$ acting on the velocity space. Under the assumption that the support of $F$ is connected, the intersection of the null spaces of $\L$ an $\T$ is generated by $F$. Assume that $F$ is integrable and normalized by
\[
\iint_{\R^d\times \R^d} F\,dv\,dx = 1\,.
\]
We shall refer to this assumption as {\bf Assumption (H0)} and assume that it holds throughout the paper, although we shall not specify it explicitly when it is not useful for the understanding of our arguments. Under such a normalization condition, we shall prove that $F$ is the unique stationary distribution function. Integrability with respect to $v$ is an assumption on~$\Gamma$, whereas integrability with respect to $x$ requires a $\Gamma$-dependent growth of the external potential $V$. Such a property is a \emph{confinement condition}.

The one-dimensionality of $\mathcal N(\L)$ for fixed $x$ suggests the existence of one local (in $x$) conservation law. We shall therefore assume the \emph{ local conservation of mass,} that is
\[
\intRd \L\,f\,dv = 0\,.
\]
\emph{Global mass conservation} for solutions of \eqref{eq:base} follows:
\[
\frac d{dt}\iint_{\R^d\times \R^d}f\,dv\,dx=\iint_{\R^d\times \R^d} (\L - \T)\,f\,dv\,dx = 0\,.
\]
For an integrable initial datum
\[
f(t=0,\cdot,\cdot) = f_I\,,
\]
let $M:=\iint_{\R^d\times\R^d} f_I\,dv\,dx$, so that $MF$ is the unique global Gibbs state with mass~$M$. In this paper we investigate the asymptotic behavior of the semigroup generated by $\L-\T$. Our goal is to quantify its stability or, to be precise, to determine the rate of convergence of $f$ towards $M F$ as $t\to\infty$. Since the equation is linear, there is no restriction to study \emph{fluctuations} around a global equilibrium, that is solutions $f$ of \eqref{eq:base} which satisfy
\be{eq:masszero}
M=\iint_{\R^d\times\R^d} f(t,x,v)\,dv\,dx = \iint_{\R^d\times\R^d} f_I(x,v)\,dv\,dx = 0\,.
\ee
Notice that distribution functions are usually nonnegative, but fluctuations around an equilibrium have to change sign.

Local mass conservation for $f$ and $F$ imply the identity
\[
\intRd \L\,f\,\left( \frac{f}{F} \right)\,dv = \intRd \L\,(f-\phi\,F)\,\left(\frac{f-\phi\,F}{F} \right)\,dv
\]
for any function $\phi=\phi(x)$, thus showing that the left hand side is, at least formally, quadratic in the distance between $f$ and the kernel of $\L$. This suggests to introduce the space $L^2(d\mu)$ where the measure $d\mu$ is defined on the phase space~by
\[
d\mu=d\mu(x,v):=\frac{ dv\,dx}{F(x,v)}\,, \quad (x,v)\in \R^d\times\R^d.
\]
We shall denote by $\scalar{\cdot}{\cdot}$ the corresponding scalar product and by $\nrm\cdot$ the associated norm, so that $\scalar fg=\iint_{\R^d\times\R^d}f\,g\,d\mu$ and $\nrm f^2=\scalar{f}{f}$. The orthogonal projection~$\Pi$ on the set of local equilibria is denoted by
\[
\label{eq:defPi}
\Pi\,f := \frac{\rho_f}\rhoF\,F\,, \quad \mbox{with } \rho_f := \intRd f\,dv\,.
\]
We also assume that the collision operator is dissipative in the sense that an \emph{`H-theorem'} holds, i.e. $\scalar{\L\,f}{f}\le 0$. Since the transport operator $\T$ is skew symmetric with respect to $\scalar{\cdot}{\cdot}$, this implies the \emph{entropy inequality}
\[
\frac{1}{2}\,\frac{d}{dt}\,\|f\|^2 = \scalar{\L\,f}{f} \le 0\,.
\]
Under the normalization condition \eqref{eq:masszero}, if the entropy dissipation $-\scalar{\L\,f}{f}$ was coercive with respect to the norm $\|\cdot\|$, exponential decay to zero as $t\to\infty$ would follow. However such a coercivity property cannot hold since $\L$ vanishes on the set of local equilibria. Instead we shall assume that \emph{microscopic coercivity} holds, i.e. there exists a positive constant $\lambda_m$, such that
\[
-\scalar{\L\,f}{f}\ge \lambda_m\,\|(1-\Pi)\,f\|^2\quad\mbox{for all}\;f\in L^2(d\mu)\,.
\]
The key tool of our method is a \emph{modified entropy functional} $\H[f]$, whose square root is a norm equivalent to $\|\cdot\|$, such that
\[
\frac{d}{dt}\,\H[f] \le -\lambda\,\H[f]\,,
\]
for an explicitly computable positive constant $\lambda$. As a consequence, we find an estimate of the exponential decay rate of the semigroup. Following the vocabulary used in \cite{Mem-villani,Herau,Mouhot-Neumann}, such a strategy will be called \emph{hypocoercivity}.

\medskip In some cases, the existence of a spectral gap can be obtained by non-constructive compactness methods, see for instance \cite{Ukai-1974} in the case of the linearized Boltzmann equation on the torus. For a non-positive closed operator $\U$ with a spectral gap $\lambda >0$, it is well-known, see \cite{Pazy-1983}, that there exists a norm equivalent to the ambiant norm, for which the semigroup of $\U+\lambda$ is contractive. However this method is not constructive regarding the norm of contractivity and gives no estimate on $\lambda$. In our approach, under assumptions specifically adapted to kinetic theory, we are able to construct an explicit Hilbert norm which is equivalent to the standard norm of $L^2(d\mu)$ and to estimate $\lambda$.

Various results related to hypocoercivity have recently appeared, on large time estimates: \cite{Fellner-Neumann-Schmeiser,Caceres-Carrillo-Goudon,Desvillettes-Villani-2001,Desvillettes-Villani-2005}; based on hypoellipticity: \cite{Herau-Nier,herau2010anisotropic,hitrik2009semiclassical}; on hypocoercivity itself: \cite{Mem-villani,Herau,Mouhot-Neumann}; on applications of the so-called kinetic-fluid decomposition: \cite{Guo-2002-I,Guo-2002-II,Guo-2003-I,Guo-2003-II,Guo-2004,Guo-Strain-2004,Guo-Strain-2006,Guo-Strain-2008}; on hyperbolic estimates based on micro-macro decompositions: \cite{MR2274461,MR2044894,MR2284213,MR2264611}. Some of the results of this paper, namely Theorems~\ref{Thm:HerauImproved} and \ref{Thm:NLBGK}, have been announced in \cite{Dolbeault2009511} without complete proofs.

Our purpose is to establish, in a simplified framework, sufficient conditions for proving hypocoercivity for a large class of linear kinetic models confined by an external potential, \emph{without assuming regularity on the initial datum} and \emph{valid for hypoelliptic kinetic Fokker-Planck equations as well as singularity preserving collisional kinetic equations}. This is the main difference with hypoelliptic methods. The method also makes use of a micro-macro decomposition. Accordingly we shall split our assumptions into two main requirements: \emph{microscopic coercivity} as introduced above, and a \emph{macroscopic coercivity} assumption, which is a spectral gap-like inequality for the operator obtained when taking an appropriate macroscopic diffusion limit that we shall now describe.

\subsection{Formal macroscopic limit}\label{subsec:difflimit}

As a motivation for the \emph{macroscopic coercivity} assumption, we recall, at a formal level, the macroscopic diffusion limit procedure, which can be seen as intermediate asymptotics governing the long time behaviour of solutions. On a large time scale, it heuristically models how local equilibria relax towards the global Gibbs state. Since the macroscopic flux of the equilibrium distribution vanishes, i.e. $\intRd v\,F\,dv = 0$, the appropriate macroscopic rescaling of the solution of \eqref{eq:base} is given by
\[
f^\eps (t,x,v) = f( \eps^2\,t, \eps\,x , v)\,,\quad 0 < \eps \ll 1\,,
\]
which is known as the \emph{parabolic rescaling}. Assuming that the potential $V$ is rescaled accordingly, we obtain the singular limit problem
\[\label{eq:diffrescale}
\eps^2\,\partial_t f^\eps + \eps\,\T\,f^\eps = \L\,f^\eps\,,
\]
as $\eps \to 0$. The assumption $\lim_{\eps\to 0} f^\eps = f^0$ leads to $\L\,f^0 = 0$ and, thus, $f^0 = \Pi\,f^0$. The identities $\Pi\,\L = \L\,\Pi = \Pi\,\T\,\Pi = 0$ imply the relations
\[\label{R-equ}
\eps\,\partial_t f^\eps + \T\,f^\eps = \L\,R^\eps\,,\quad \partial_t \Pi\,f^\eps + \Pi\,\T\,R^\eps = 0\,,
\]
with $R^\eps := \tfrac 1\eps\,(1-\Pi)\,f^\eps$. Assuming formally that $\lim_{\eps\to 0} R^\eps = R^0$, the first equation can be solved for $\eps=0$ with respect to $R^0$, giving
\[
R^0 = \J\,\T\,f^0 = \J\,\T\,\Pi\,f^0\,,
\]
where $\J$ denotes the inverse of the restriction of $\L$ to the orthogonal complement of its null space. Note that the inhomogeneity $\T\,\Pi\,f^0$ satisfies the solvability condition $\Pi\,\T\,\Pi\,f^0 = 0$. The second equation becomes
\be{eq:abstractlimit}
\partial_t\,\Pi\,f^0 = (\T\,\Pi)^*\J\,(\T\,\Pi)\,f^0\,,
\ee
where the superscript $*$ denotes the adjoint operator with respect to $\scalar{\cdot}{\cdot}$, and the skew symmetry of $\T$ has been used. A straightforward computation shows that this is equivalent to a drift-diffusion equation for the macroscopic density $\rho^0 = \rho_{f^0}$:
\be{eq:evolrhofinal}
\partial_t \rho^0 = \nx\cdot \Big[\rhoF\,\diff\,\nx\big(\tfrac{\rho^0}{\rhoF}\big) \Big] = \nx\cdot \Big[\nx(\diff\,\rho^0) + \gamma\,\rho^0\,\nx V \Big]\,.
\ee
Here $\diff$ is scalar due to the rotational symmetry of $\L$,
\[
\rhoF\,\diff = - \frac{1}{d} \intRd v\cdot \J\,(v\,F)\,dv\quad\mbox{and}\quad\gamma\,\nx V = -\,\frac 1\rhoF\,\nx(\rhoF\, \diff)\,.
\]
The operator $\J$ being negative definite on $(1-\Pi)\,L^2(d\mu)$, $\diff(x) > 0$ for all $x\in\R^d$. In the two following important cases, the macroscopic transport coefficients $\gamma$ and~$\diff$ have particularly simple expressions.

\medskip\noindent{\bf Case (C1)}. When $\Gamma(s)=e^{-s}$, the global Gibbs state is a \emph{Maxwellian}, or Gaussian function, which factorizes as
\[
F(x,v) = \rhoF(x)\,\BGprofile(v)\,, \quad\mbox{with}\quad \rhoF=\frac{e^{-V}}{\intRd e^{-V}\,dx}\quad\mbox{and}\quad\BGprofile(v) = \frac{e^{-|v|^2/2}}{(2\,\pi)^{d/2}}\,.
\]
Notice that the separation of position and velocity variables is a characteristic property of Maxwellian functions. Both coefficients $\gamma$ and $\diff$ are constant, equal to $\frac 1d\intRd v\cdot \J\,(v\,\BGprofile)\,dv$ and $\rho^0$ solves the Fokker-Planck equation
\[
\partial_t \rho^0 = \diff\,\nx\cdot \left(\nx\rho^0 + \rho^0 \,\nx V \right)\,.
\]

\medskip\noindent{\bf Case (C2)}. The collision operator $\L$ is, for fixed $x$, a \emph{time-relaxation operator} onto~span$\{F\}$, i.e.,
\[
\label{eq:relax}
\L = \Pi - 1\,.
\]
In this case $\J =-\mbox{Id}$ holds, so that
\[
\rhoF\,\diff = \mF := \frac 1d \intRd |v|^2\,F\,dv\,,
\]
and, since $\nx(\rhoF\,\diff)=-\,\rhoF\, \nx V$ because
\[
\frac 1d \intRd |v|^2\,\Gamma'(E(x,v))\,dv = \frac 1d \intRd v\cdot\nv F\,dv = -\,\rhoF\,,
\]
the macroscopic limit equation reads
\[
\partial_t \rho^0 = \nx \cdot \left( \nx(\diff\,\rho^0)+\rho^0\,\nx V \right)\,.
\]

\medskip The intersection of both cases, (C1) and (C2) i.e., $\L = \Pi - 1$ with $\Pi\,f = \rho_f\,\BGprofile$, gives $\diff = \gamma = 1$. This is the linear BGK case, which has been considered in \cite{Dolbeault2009511}. In both cases, \eqref{eq:abstractlimit} can be rewritten as
\[
\partial_t \Pi\,f^0 = -\,\sigma_0\,(\T\,\Pi)^* (\T\,\Pi)\,f^0
\]
for some positive constant $\sigma_0$, with $\sigma_0\equiv\sigma$ in Case (C1) and $\sigma_0 = 1$ in Case~(C2). With Assumption \eqref{eq:masszero} on the initial data, we expect decay to zero of the solution. Under a \emph{macroscopic coercivity} assumption, namely {\bf (H2)} (see below), which is equivalent to a Poincar\'e inequality (see Lemma~\ref{eq:macroSG}),the decay of $\Pi\,f^0$ is exponential.

\subsection{Method and main result in an abstract setting}\label{sec-abstract}

We start with the basic assumption that $\L$ and $\T$ are closed linear operators on an Hilbert space $\mathcal H$, such that $\L-\T$ generates the strongly continuous semigroup $e^{(\L-\T)\,t}$ on $\mathcal H$. The orthogonal projection on the null space $\mathcal N(\L)$ of $\L$ is denoted by $\Pi$ and $\mathcal D(\L)$ is the domain of~$\L$. We assume that the restriction of $\L$ to $\mathcal N(\L)^\bot$ is coercive. More precisely, our first assumption is:
\par\medskip\noindent{\bf Assumption (H1)} (microscopic coercivity): \emph{The operator $\L$ is symmetric and there exists $\lambda_m > 0$ such that
\[
-\scalar{\L\,f}{f} \ge \lambda_m\,\|(I-\Pi)\,f\|^2\quad\mbox{for all}\;f\in \mathcal D(\L)\,.
\]}
Motivated by the results of Section \ref{subsec:difflimit}, coercivity of the transport operator is required, when acting on $\mathcal N(\L)$:
\par\medskip\noindent{\bf Assumption (H2)} (macroscopic coercivity): \emph{The operator $\T$ is skew symmetric and there exists $\lambda_M > 0$ such that
\[
\| \T\,\Pi\,f\|^2 \ge \lambda_M\,\|\Pi\,f\|^2\quad\mbox{for all}\;f\in \mathcal H\;\mbox{such that}\;\Pi\,f\in \mathcal D(\T)\,.
\]}

Inspired by \cite{Herau}, we introduce the \emph{modified entropy}
\[\label{Lyapunov}
\H[f] := \frac 12\,\|f\|^2 + \eps\,\scalar{\A\,f}{f}\,, \quad\mbox{with }\A := \big(1 + (\T\,\Pi)^*(\T\,\Pi) \big)^{-1} (\T\,\Pi)^*\,.
\]
The constant $\eps>0$ will be chosen below. A straightforward computation for a solution $f$ of \eqref{eq:base}, now considered as an abstract ODE, gives
\[
\frac d{dt}\,\H[f]= -\D[f]
\]
where the \emph{dissipation of entropy functional} is given by
\[
\D[f]:= -\scalar{\L\,f}{f} + \eps\,\scalar{\A\,\T\,\Pi\,f}f + \eps\,\scalar{\A\,\T\,(1-\Pi)\,f}f - \eps\,\scalar{\T\,\A\,f}f - \eps\,\scalar{\A\,\L\,f}{f}\,.
\]
By {\bf (H1)}, {\bf (H2)}, and by $\A\,\T\,\Pi = (1 + (\T\,\Pi)^*(\T\,\Pi))^{-1} (\T\,\Pi)^*(\T\,\Pi)$, the sum of the first two terms in $\D[f]$ is coercive:
\[
-\scalar{\L\,f}{f}+\eps\,\scalar{\A\,\T\,\Pi\,f}f \ge \min\Big\{ \lambda_m, \frac{\eps\,\lambda_M}{1+\lambda_M} \Big\}\,\|f\|^2
\]
For the completion of our program, we need to show that $\H[f]$ is equivalent to $\|f\|^2$ and to control the last three terms of $\D[f]$. Part of this can be carried out at the abstract level under the following additional assumption:
\par\medskip\noindent{\bf Assumption (H3)}:
\[
\Pi\,\T\,\Pi = 0\,.
\]

\begin{lemma}
\label{lemma1}
Let Assumptions {\bf (H1)--(H3)} hold. Then the operators $\A$ and $\T\,\A$ are bounded, and for all $f\in \mathcal H$,
\be{ExplicitBound}
\nrm{\A\,f} \le \frac 12\,\nrm{(1-\Pi)\,f} \quad\mbox{and}\quad \nrm{\T\,\A\,f} \le \nrm{(1-\Pi)\,f}\,.
\ee
\end{lemma}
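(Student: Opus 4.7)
The plan is to study $g := \A f$ through its defining equation rather than attempt a direct spectral analysis. By construction, $g$ solves
\[
\bigl(1 + (\T\Pi)^*(\T\Pi)\bigr)\,g = (\T\Pi)^* f.
\]
Since $1 + (\T\Pi)^*(\T\Pi) \ge 1$ is self-adjoint and positive, it is boundedly invertible, so $g$ exists and is unique; the real work is to extract the quantitative bounds.

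The first key reduction is to show that $g$ already lies in the macroscopic subspace, $g = \Pi g$. Because $\Pi$ is an orthogonal projection, $(\T\Pi)^* = \Pi^*\T^* \subseteq -\Pi\T$, so the range of $(\T\Pi)^*$ sits in $\mathcal N(\L) = \operatorname{Range}(\Pi)$. Moreover, for any $h \in \operatorname{Range}(1-\Pi)$ one has $\T\Pi h = 0$, so $(\T\Pi)^*(\T\Pi)$ vanishes on $\operatorname{Range}(1-\Pi)$ and preserves $\operatorname{Range}(\Pi)$. Thus $1 + (\T\Pi)^*(\T\Pi)$ respects the orthogonal splitting, and so does its inverse; applied to the right-hand side $(\T\Pi)^*f \in \operatorname{Range}(\Pi)$, it yields $g \in \operatorname{Range}(\Pi)$. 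In particular $\T g = \T\Pi g$, which is already what appears in $\T\A f$.

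Next I would test the defining equation against $g$ itself, obtaining
\[
\|g\|^2 + \|\T\Pi g\|^2 = \scalar{(\T\Pi)^*f}{g} = \scalar{f}{\T\Pi g}.
\]
The point of bringing in Assumption \textbf{(H3)} is to drop the macroscopic part of $f$ from the right-hand side: since $\T\Pi g$ pairs with $\Pi f$ through $\scalar{\Pi f}{\T\Pi g} = -\scalar{\Pi\T\Pi f}{g} = 0$, only the microscopic remainder survives, giving
\[
\|g\|^2 + \|\T\Pi g\|^2 = \scalar{(1-\Pi)f}{\T\Pi g} \le \nrm{(1-\Pi)f}\,\nrm{\T\Pi g}.
\]

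From this one inequality both estimates fall out. Writing $a=\|g\|$, $b=\|\T\Pi g\|$, $c=\nrm{(1-\Pi)f\|}$, we have $a^2 + b^2 \le c\,b$. The second coordinate alone gives $b \le c$, which is the bound on $\nrm{\T\A f}$. For the sharper constant $\tfrac12$ on $\nrm{\A f}$, the trick (and the only mildly non-routine step) is to notice $a^2 \le b(c-b) \le c^2/4$ by elementary AM--GM, yielding $\|g\| \le c/2$. The main delicate point is really the subspace argument showing $g = \Pi g$; once that is in hand, the quantitative bounds reduce to a single Cauchy--Schwarz plus a one-line optimization.
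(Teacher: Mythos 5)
Your proposal is correct and follows essentially the same route as the paper: rewrite $g=\A f$ via its defining equation, observe $g=\Pi g$ (the paper gets this directly by reading $g=\Pi\,\T^2\,\Pi\,g-\Pi\,\T f$ off the equation, rather than via invariance of the splitting under the inverse), test against $g$, kill $\Pi f$ using \textbf{(H3)} and skew-symmetry, and conclude by Cauchy--Schwarz with the same elementary optimization (your AM--GM step is the paper's $\tfrac14\nrm{(1-\Pi)f}^2+\nrm{\T\,\Pi\,g}^2$ bound in disguise). No gaps; the differences are purely presentational.
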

\begin{proof} The equation $\A\,f = g$ is equivalent to
\[\label{Af=g}
(\T\,\Pi)^* f = g + (\T\,\Pi)^*(\T\,\Pi)\,g\,.
\]
Writing this as $g = \Pi\,\T^2\,\Pi\,g - \Pi\,\T\,f$ proves $\A=\Pi\,\A$ and, thus, $\T\,\A f = \T\,\Pi\,g$. Taking the scalar product of the above equality with $g$ and using {\bf (H3)}, we get
\begin{eqnarray*}
\|g\|^2 + \|\T\,\Pi\,g\|^2 &=& \scalar{f}{\T\,\Pi\,g} = \scalar{(1-\Pi)\,f}{\T\,\Pi\,g} \\
&\le& \nrm{(1-\Pi)\,f}\;\nrm{\T\,\Pi\,g} \le \frac 14\,\nrm{(1-\Pi)\,f}^2 + \nrm{\T\,\Pi\,g}^2\,,
\end{eqnarray*}
which completes the proof.\end{proof}

The boundedness of the remaining terms in $\D[f]$ has to be proven case by case. We shall therefore assume it in the abstract setting.
\par\medskip\noindent{\bf Assumption (H4)} (Boundedness of auxiliary operators): \emph{The operators $\A\,\T\,(1-\Pi)$ and $\A\,\L$ are bounded, and there exists a constant $C_M>0$ such that, for all~$f\in\mathcal H$,
\[
\|\A\,\T\,(1-\Pi)\,f\| + \|\A\,\L\,f\| \le C_M\,\|(1-\Pi)\,f\|\,.
\]}

\begin{theorem}
\label{theo:abstract}
Let Assumptions {\bf (H1)--(H4)} hold. Then there exist positive constants $\lambda$ and $C$, which are explicitly computable in terms of $\lambda_m$, $\lambda_M$, and $C_M$, such that, for any initial datum $f_I\in \mathcal H$,
\[
\label{eq:decH}
\big\|e^{t\,(\L-\T)} f_I\big\| \le C\,e^{-\lambda\,t}\,\|f_I\|\,,\quad\forall\,t\ge0\,.
\]
\end{theorem}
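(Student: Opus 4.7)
The plan is to show that for $\eps>0$ chosen sufficiently small, the modified entropy $\H[f]$ is equivalent to $\tfrac12\|f\|^2$ and the dissipation $\D[f]$ satisfies $\D[f]\ge\lambda\,\H[f]$ for an explicit $\lambda>0$; Gronwall's inequality applied to the ODE $\tfrac d{dt}\H[f]=-\D[f]$ then converts into the stated exponential decay of $\|e^{t(\L-\T)}f_I\|$.

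Norm equivalence is immediate from Lemma \ref{lemma1}: by Cauchy--Schwarz, $|\scalar{\A f}{f}|\le\|\A f\|\,\|f\|\le\tfrac12\|(1-\Pi)f\|\,\|f\|\le\tfrac12\|f\|^2$, hence
\[
\tfrac{1-\eps}{2}\|f\|^2 \;\le\; \H[f] \;\le\; \tfrac{1+\eps}{2}\|f\|^2\quad\mbox{for}\;0<\eps<1.
\]
For the lower bound on $\D[f]$, the first two terms are treated as in the excerpt, but keeping the two contributions separate: with $u:=(\T\Pi)^*(\T\Pi)$ one has $\A\T\Pi=(I+u)^{-1}u$, and $\scalar{u g}{g}=\|\T\Pi g\|^2\ge\lambda_M\|\Pi g\|^2$ by \textbf{(H2)} while $u=0$ on the range of $1-\Pi$, so combined with \textbf{(H1)} this yields
\[
-\scalar{\L f}{f}+\eps\scalar{\A\T\Pi f}{f}\;\ge\;\lambda_m\|(1-\Pi)f\|^2+\tfrac{\eps\lambda_M}{1+\lambda_M}\|\Pi f\|^2.
\]

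The three remaining terms are controlled by Lemma \ref{lemma1} and \textbf{(H4)}, which give $\|\T\A f\|\le\|(1-\Pi)f\|$ and $\|\A\T(1-\Pi)f\|+\|\A\L f\|\le C_M\|(1-\Pi)f\|$. Hence, by Cauchy--Schwarz,
\[
\bigl|\eps\scalar{\A\T(1-\Pi)f}{f}\bigr|+\bigl|\eps\scalar{\T\A f}{f}\bigr|+\bigl|\eps\scalar{\A\L f}{f}\bigr|\;\le\;\eps(1+C_M)\,\|(1-\Pi)f\|\,\|f\|.
\]
Splitting $\|f\|\le\|\Pi f\|+\|(1-\Pi)f\|$ and applying Young's inequality with a small parameter $\eta>0$ bounds the right-hand side by $\eps(1+C_M)\bigl[(1+\tfrac{1}{2\eta})\|(1-\Pi)f\|^2+\tfrac{\eta}{2}\|\Pi f\|^2\bigr]$. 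First choose $\eta$ so that $\tfrac{\eta(1+C_M)}{2}<\tfrac{\lambda_M}{2(1+\lambda_M)}$, then choose $\eps<1$ so that $\eps(1+C_M)(1+\tfrac{1}{2\eta})<\tfrac{\lambda_m}{2}$. Both positive contributions in $\D[f]$ are then at worst halved, yielding $\D[f]\ge\kappa\bigl(\|(1-\Pi)f\|^2+\|\Pi f\|^2\bigr)=\kappa\|f\|^2$ for an explicit $\kappa>0$ depending only on $\lambda_m,\lambda_M,C_M$. By norm equivalence, $\D[f]\ge\lambda\,\H[f]$ with $\lambda:=2\kappa/(1+\eps)$, Gronwall gives $\H[f(t)]\le e^{-\lambda t}\H[f_I]$, and the theorem follows with $C=\sqrt{(1+\eps)/(1-\eps)}$.

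The only delicate step is the coupled selection of $\eps$ and $\eta$: the cross terms must be absorbed simultaneously into the two coercive contributions, which forces $\eta$ to be small in terms of $\lambda_M/(1+C_M)$ and then $\eps$ to be small in terms of $\lambda_m\eta/(1+C_M)$. This bookkeeping is what makes the final rate $\lambda$ depend on all three constants $\lambda_m$, $\lambda_M$, $C_M$ in a nontrivial way; once the splitting is set, every other step is a direct application of Lemma \ref{lemma1} together with \textbf{(H1)}, \textbf{(H2)}, \textbf{(H4)}.
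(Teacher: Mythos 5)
Your proposal is correct and follows essentially the same route as the paper's proof: the same norm equivalence from Lemma \ref{lemma1}, the same separate coercivity bound $\lambda_m\|(1-\Pi)f\|^2+\tfrac{\eps\,\lambda_M}{1+\lambda_M}\|\Pi f\|^2$ for the first two terms of $\D[f]$, the same absorption of the cross term $\eps\,(1+C_M)\,\|(1-\Pi)f\|\,\|f\|$ by Young's inequality with a small parameter chosen before $\eps$, and then Gronwall. The only cosmetic discrepancy is a factor of $2$ in the naming of the rate: your $\lambda=2\kappa/(1+\eps)$ is the decay rate of $\H$, so the exponential rate for $\|f\|$ in the theorem is $\kappa/(1+\eps)$, exactly as in the paper.
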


\begin{proof} The first inequality in \eqref{ExplicitBound} implies
\be{equivalence}
\frac 12\,(1-\eps)\,\|f\|^2 \le \H[f] \le \frac 12\,(1+\eps)\,\,\|f\|^2\,.
\ee
For any $\eps\in(0,1)$, $\H[f]$ is equivalent to $\|f\|^2$. The second inequality in \eqref{ExplicitBound} and {\bf (H1)--(H4)} imply
\begin{eqnarray*}
\D[f] &\kern -3pt\ge&\kern -3pt \lambda_m\,\|(1-\Pi)\,f\|^2 + \tfrac{\eps\,\lambda_M}{1+\lambda_M}\,\|\Pi\,f\|^2 - \eps\,(1+C_M)\,\|(1-\Pi)\,f\|\,\|f\| \\
&\kern -3pt\ge&\kern -3pt \left[\lambda_m - \eps\,(1+C_M)(1 + \tfrac{1}{2\,\delta})\right] \|(1-\Pi)\,f\|^2 + \eps\left[\tfrac{\lambda_M}{1+\lambda_M} - (1+C_M)\tfrac{\delta}{2}\right] \|\Pi\,f\|^2
\end{eqnarray*}
for an arbitrary positive $\delta$. By choosing first $\delta$ and then $\eps$ small enough, a positive constant $\kappa$ can be found, such that $\D[f]\ge \kappa\,\|f\|^2$. Using \eqref{equivalence}, this implies
\[
\frac{d}{dt}\,\H[f] \le -\frac{2\,\kappa}{1+\eps}\,\H[f]\,,
\]
for $f = e^{t\,(\L-\T)}f_I$, completing the proof with $\lambda = \kappa/(1+\eps)$ and $C = \sqrt{\frac{1+\eps}{1-\eps}}$.
\end{proof}

Let us conclude this abstract approach by some comments. First of all, our proof is constructive: $\H$ is an explicit Lyapunov functional and $\lambda$ can be computed. The work of F. H\'erau has been a crucial source of inspiration for our method. In \cite{Herau}, he deals with the linear time relaxation collision kernel corresponding to Maxwellian Gibbs states, in case of a confining potential $V$ growing at most quadratically at infinity and such that the associated Witten Laplacian satisfies a spectral gap inequality. In our approach, we are able to relax some of these assumptions. See Theorem~\ref{Thm:HerauImproved}.

Our results apply to various Fokker-Planck and Boltzmann models. We shall compare applications of Theorem~\ref{theo:abstract} to previous results in Section 3. Only \cite{Mouhot-Neumann} and \cite{Mem-villani} deal with abstract results like the ones of Theorem~\ref{theo:abstract}. Ours are more general than the ones of \cite{Mouhot-Neumann} since we deal with a general confining potential. In~\cite{Mouhot-Neumann}, the problem is indeed set on a torus, a setting to which our method can be adapted without any difficulty. It is also more general than in \cite{Mem-villani} since we deal not only with Fokker-Planck type operators, or operators in {\rm H\"ormander form} in the words of \cite{Mem-villani}, but also with non-local integral collision operators, like in \cite{Mouhot-Neumann}. Last but not least, our results are also stronger than those in \cite{Mouhot-Neumann} and \cite{Mem-villani} in the sense that we construct a \emph{zeroth order norm of hypocoercivity}, which is equivalent to $L^2$ and not $H^k$, for some $k \ge 1$. However, our results are weaker than those in \cite{Mouhot-Neumann} at least in one aspect: we only deal with models with $1$-dimensional space of collision invariants, whereas, in~\cite{Mouhot-Neumann}, any finite dimension is allowed. In principle, our approach can be extended to such a situation, which is the purpose of a current research project \cite{DMS-2part}.

\subsection{Hypocoercivity for a toy problem}\label{subsec:toy}

To illustrate the fact that our formal setting applies to other models than the kinetic equations of Section~\ref {subsec:motivation}, we introduce the following toy model, which captures very well the essential features of our hypocoercive approach. We consider a one-dimensional Cattaneo model introduced in \cite{MR0032898}, which can be written as a kinetic model with only two velocities $v = \pm 1$, and where $\L$ describes a switching process between the two velocities without preference for one of them. As a further simplification we replace the confining potential by a periodicity assumption, where $x$ varies in a one-dimensional torus. The model equations are
\[
\partial_t f^\pm \pm \partial_x f^\pm = \pm\,\frac12\,(f^- - f^+)\,,
\]
for the distributions $f^\pm(t,x)$ of right- and left-moving particles, periodic in $x$ with period $2\,\pi$.

The interest of such a model is that it gives an application of our hypocoercivity method in a discrete setting, or even for a finite dimensional ODE version of it, if we truncate the Fourier sum in the $x$ variable and keep only a finite number of terms.

Initial value problems can be solved explicitly by Fourier decomposition. Introducing the total density $\rho = f^+ + f^-$, the total flux $j = f^+ - f^-$, and their Fourier representations
\[
\rho(t,x) = \sum_{k\in\mathbb{Z}} \rho_k(t)\,e^{ikx}\,,\quad j(t,x) = \sum_{k\in\mathbb{Z}} j_k(t)\,e^{ikx}\,,
\]
leads to real ODE systems for $U_k={u_k \choose v_k} := {{\rm Re}(\rho_k) \choose {\rm Im}(j_k)}$ and $\widetilde U_k:= {{\rm Im}(\rho_k) \choose {\rm Re}(j_k)}$:
\be{eq:toy}
\frac{dU_k}{dt}+\T_k\,U_k=\L\,U_k\,,
\ee
where the skew symmetric matrix $\T_k:=\left(\begin{smallmatrix}0&-k\cr k&0\end{smallmatrix}\right)$ represents the transport operator, $\L:=\left(\begin{smallmatrix}0&0\cr 0&-1\end{smallmatrix}\right)$ represents the collision operator acting only on the microscopic component $j$, and $\widetilde U_k$ solves an analogous system with $\T_k$ replaced by $-\T_k$. Eq. \eqref{eq:toy} is linear, and it is elementary to check that the eigenvalues of $\L-\T$ are given by $\lambda_{0,\pm}:=0,-1$ and $\lambda_{k,\pm}:=(-1\pm i\sqrt{4\,k^2-1})/2$ if $k\ne 0$. All solutions converge to an eigenstate of the zero eigenvalue: $U_0 = (\rho_0,0)$ and $U_k = 0$ for $k\ne 0$. The convergence is exponential with its speed determined by the spectral gap $1/2$.

For $k\ne 0$, we can compute the entropy dissipation as
\[
\frac d{dt}\left(\tfrac 12\,|U_k(t)|^2\right)=-|v_k(t)|^2\,,
\]
so that it is clear that no exponential decay directly follows, since the right hand side is not coercive and there is an unbounded increasing sequence $(t_n)_{n\in\mathbb N}$ such that $v_k(t_n)=0$. Note that microscopic coercivity holds with $\lambda_m = 1$.

With $\Pi=\left(\begin{smallmatrix}1&0\cr 0&0\end{smallmatrix}\right)$ and $\J =-\mbox{Id}$, we find that $(\T\,\Pi)^*\J\,(\T\,\Pi)=\left(\begin{smallmatrix}-k^2&0\cr 0&0\end{smallmatrix}\right)$, thus giving for the macroscopic diffusion limit $du_k^0/dt = -k^2 u_k^0$, and showing also that macroscopic coercivity holds with $\lambda_M = 1$. According to the strategy of the Section~\ref{sec-abstract}, for $k\ne 0$ we introduce the modified entropies
\[
\H_k(t)=\tfrac 12\,|U_k(t)|^2+\eps\,\frac k{1+k^2}\,u_k(t)\,v_k(t)\,,\quad t\ge 0\,.
\]
Observing that, for $k\ge 1$,
\begin{multline*}
\tfrac 12 \left(1-\eps\,\tfrac k{1+k^2}\right)|U_k|^2 \le \tfrac 12 \left(1-\eps\,\tfrac k{1+k^2}\right)|U_k|^2+\tfrac\eps{2}\,\tfrac k{1+k^2}\,|u_k+v_k|^2=\H_k\,,\\
\H_k= \tfrac 12 \left(1+\eps\,\tfrac k{1+k^2}\right)\,|U_k|^2 - \tfrac\eps{2}\,\frac k{1+k^2}\,|u_k-v_k|^2 \le\tfrac 12 \left(1+\eps\,\tfrac k{1+k^2}\right)|U_k|^2\,,
\end{multline*}
using $\sup_{k\ge 1}\tfrac k{1+k^2}=\tfrac 12\le 1$, and performing a similar computation for $k\le -1$, we finally get
\[
\tfrac 12\,(1-\eps)\,|U_k|^2\le \H_k\le\tfrac 12\,(1+\eps)\,|U_k|^2.
\]
Hence, for any $\eps\in(0,1)$, $\H_k(t)$ decays exponentially if and only if $|U_k(t)|^2$ decays exponentially as well. Obviously, we have
\[
\frac 12\le\frac{k^2}{1+k^2}\le 1\,,
\]
which makes it easy to compare $\H_k$ with $\frac d{dt}\,\H_k$, given by
\begin{multline*}
\frac d{dt}\,\H_k=-\eps\,\frac{k^2}{1+k^2}\,u_k^2-\left(1-\eps\,\frac {k^2}{1+k^2}\,\right)v_k^2- \eps\,\frac k{1+k^2}\,u_k\,v_k\\
\le-\frac\eps 2\,u_k^2-\left(1-\eps\right)\,v_k^2+ \frac\eps{2}\,|u_k|\,|v_k| \le-\frac{\eps}{2}\,(1-\lambda^2)\,u_k^2 - \left(1-\eps-\tfrac\eps{8\,\lambda^2}\right)v_k^2
\end{multline*}
for any $\lambda\in(0,1)$. If $\eps\in\big(0,\tfrac{8\,\lambda^2}{8\,\lambda^2+1}\big)$, the coercivity constant
\[
\kappa:=\min\left\{\tfrac{\eps}{2}\,(1-\lambda^2),\,1-\eps-\tfrac\eps{8\,\lambda^2}\right\}
\]
is positive and
\[
\frac d{dt}\,\H_k\le-\kappa\,|U_k|^2\le - \frac{2\,\kappa}{1+\eps}\,\H_k\;.
\]
This implies $|U_k(t)|$ decays like $e^{-\kappa\,t/(1+\eps)}$. We may observe that
\[
\frac{\kappa}{1+\eps} < \frac{\min\{\frac\eps{2},1-\eps\}}{1+\eps} \le \frac 15\,,
\]
thus showing that the method is not optimal, in the sense that it does not give the exact decay rate, $1/2$, even when refining the above estimates and computing $\kappa$ for each $k$.

\subsection{Application to kinetic equations}

Let us apply the abstract procedure of Section \ref{sec-abstract} to the setting of Section \ref{subsec:motivation}. Thus, we
set
\[
\T\,f = v\cdot\nx f - \nx V\cdot\nv f\,,\quad \Pi\,f = \rho_f\,\frac{F}{\rhoF}\,,
\]
where the potential $V$ is given as well as the energy profile $\Gamma$. We recall that the unique global equilibrium is $F(x,v) = \Gamma(|v^2|/2 + V(x))$, $x,v\in\R^d$. For such an equilibrium distribution, define the velocity moments up to the fourth order by
\[
\rhoF:= \intRd F\,dv\,,\quad\mF:= \frac{1}{d} \intRd |v |^2 F\,dv\,,\quad M_F:= \intRd |v|^4 F\,dv
\]
and assume that they are measurable functions of $x$. We consider the Hilbert space $\mathcal H = \{ f\in L^2(d\mu):\, \iint_{\R^d\times\R^d} f\,dv\,dx = 0\}$, with $d\mu(x,v) = dx\,dv/F(x,v)$. The collision operator $\L$ remains unspecified at the moment, so that we shall defer the discussion of the microscopic coercivity for a while. A simple computation with $u=\rho_f/\rhoF$ shows that the macroscopic coercivity assumption is equivalent to a weighted Poincar\'e inequality:
\begin{lemma} Assumption {\bf (H2)} holds if and only if
\be{eq:macroSG}
\intRd \left| \nx u \right|^2 \mF\,dx \ge \lambda_M \intRd u^2\,\rhoF\,dx
\ee
for any $u\in L^2(\rhoF\,dx)$ with $\nx u \in L^2(\mF\,dx)$ such that $\intRd u\,\rhoF\,dx = 0$.
\end{lemma}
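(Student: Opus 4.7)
The plan is to translate the abstract inequality $\|\T\,\Pi\,f\|^2 \ge \lambda_M\,\|\Pi\,f\|^2$ into the concrete weighted Poincar\'e inequality by computing both sides explicitly in terms of $u := \rho_f/\rhoF$, and then observing that the correspondence $f \leftrightarrow u$ is a bijection between the relevant function spaces.

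First I would write $\Pi\,f = u\,F$ with $u$ depending only on $x$, and compute $\T\,\Pi\,f$ directly. Using the product rule and the crucial identity $\T F = 0$ (which follows from $F$ being a global equilibrium, i.e., $F$ depends on $(x,v)$ only through the Hamiltonian $E$), one obtains
\[
\T(u\,F) = (v \cdot \nx u)\,F + u\,\T F = (v \cdot \nx u)\,F.
\]
Then the norm in $L^2(d\mu)$ evaluates as
\[
\|\T\,\Pi\,f\|^2 = \iint_{\R^d\times\R^d} (v \cdot \nx u)^2\,F\,dv\,dx.
\]
By the rotational symmetry of $F$ in $v$ (a consequence of the rotational symmetry of $\L$ assumed in Section \ref{subsec:motivation}), the tensor $\intRd v_i v_j F\,dv$ equals $\mF\,\delta_{ij}$, so the inner $v$-integral reduces to $|\nx u|^2 \mF$. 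Analogously,
\[
\|\Pi\,f\|^2 = \iint_{\R^d\times\R^d} \frac{u^2 F^2}{F}\,dv\,dx = \intRd u^2\,\rhoF\,dx.
\]
Thus $\|\T\,\Pi\,f\|^2 \ge \lambda_M\,\|\Pi\,f\|^2$ is precisely \eqref{eq:macroSG}.

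Second I would handle the correspondence of admissible test functions. The constraint $f \in \mathcal H$ translates into $\intRd u\,\rhoF\,dx = 0$, since $\rho_f = u\,\rhoF$ and $\iint f\,dv\,dx = \intRd \rho_f\,dx$. The requirement $\Pi\,f \in \mathcal D(\T)$ is exactly $(v \cdot \nx u)\,F \in L^2(d\mu)$, i.e., $\nx u \in L^2(\mF\,dx)$, while $\Pi\,f \in L^2(d\mu)$ is $u \in L^2(\rhoF\,dx)$. Conversely, given any $u$ satisfying the hypotheses of \eqref{eq:macroSG}, the function $f := u\,F$ belongs to $\mathcal H$ with $\Pi\,f = f$, so the two quantifications are equivalent.

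There is no serious obstacle here: the whole proof rests on the single observation $\T F = 0$ combined with rotational symmetry. The only place that requires a bit of care is ensuring that the integration-by-parts/Leibniz computation for $\T(uF)$ is justified and that the space of admissible $u$ in \eqref{eq:macroSG} really matches the space of $\Pi\,f$ with $f \in \mathcal H$ and $\Pi\,f \in \mathcal D(\T)$; but both verifications are immediate once the identifications above are in place.
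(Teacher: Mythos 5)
Your proof is correct and is exactly the ``simple computation'' the paper has in mind (the paper itself records the key identity $\T\,\Pi\,f = F\,v\cdot\nx u_f$ and the isotropy of $F$ in $v$, which gives $\intRd v\otimes v\,F\,dv=\mF\,\mathrm{Id}$, so that $\|\T\,\Pi\,f\|^2=\intRd|\nx u|^2\,\mF\,dx$ and $\|\Pi\,f\|^2=\intRd u^2\,\rhoF\,dx$). Your additional check that the zero-mean and integrability conditions on $u$ correspond bijectively to $f\in\mathcal H$ with $\Pi\,f\in\mathcal D(\T)$ (via $f=u\,F$, $\Pi\,f=f$) is precisely what makes the two statements equivalent, so nothing is missing.
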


In case of kinetic equations, Assumption {\bf (H3)} is a consequence of the computation
\[
\T\,\Pi\,f = F\,v\cdot\nx u_f\,,
\]
with $u_f:=\rho_f/\rhoF$, and of the observation that the right hand side is an odd function of $v$, whose mean value is zero. In other words: The macroscopic flux of the equilibrium distributions vanishes.

Concerning the Assumption {\bf (H4)}, we remark that boundedness of $\A\,\L$ is possible even for unbounded collision operators $\L$ (see Section~\ref {sec:regularity}). The boundedness assumption on $\A\,\T\,(1-\Pi)$ can be interpreted as an elliptic regularity result for:
\be{eq:u_g}
u - \frac{1}{\rhoF} \nx\cdot (\mF\,\nx u) = w\,.
\ee
\begin{lemma} \label{lemma2} If there exists a positive constant $C$ such that
\be{regularity-est}
\|\nx^2u\|_{L^2(M_F\,dx)} \le C\,\|w\|_{L^2(\rhoF\,dx)}
\ee
for any $w\in L^2(\rhoF\,dx)$ and for any solution $u\in L^2(\rhoF\,dx)$ with $\nx u\in L^2(\mF\,dx)$, then the operator $\A\,\T\,(1-\Pi)$ is bounded on $\mathcal H$.
\end{lemma}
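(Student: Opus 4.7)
The plan is to reduce the boundedness of $\A\,\T\,(1-\Pi)$ to the elliptic regularity estimate \eqref{regularity-est} by a duality argument. Given $f\in\mathcal H$, set $g:=\A\,\T\,(1-\Pi)\,f$. By Lemma~\ref{lemma1}, $g\in\mathcal N(\L)$, so $g=u_g\,F$ with $u_g=\rho_g/\rhoF$, and the target $\|g\|\le C\,\|(1-\Pi)f\|$ is equivalent to $\|u_g\|_{L^2(\rhoF\,dx)}\le C\,\|(1-\Pi)f\|$ since $\|g\|^2=\int u_g^2\,\rhoF\,dx$. The first step is to convert the abstract defining relation for $g$ into a weak elliptic formulation for $u_g$.

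Starting from $g+(\T\Pi)^*(\T\Pi)\,g=(\T\Pi)^*\,\T\,(1-\Pi)\,f$, take the scalar product with a test $h=u_h\,F\in\mathcal N(\L)$. Using $(\T\Pi)^*=-\Pi\,\T$, the identity $\T(u_h\,F)=(v\cdot\nx u_h)\,F$ coming from $\T\,F=0$, and the fact that $\T$ is skew-symmetric with respect to $dx\,dv$ as well (since $\nx\cdot v=\nv\cdot\nx V=0$), the right-hand side becomes $-\iint (1-\Pi)f\;\T(v\cdot\nx u_h)\,dx\,dv$. A direct computation gives $\T(v\cdot\nx u_h)=v\otimes v:\nx^2 u_h-\nx V\cdot\nx u_h$, and the crucial observation is that the second contribution drops out after $v$-integration: $\nx V\cdot\nx u_h$ is $v$-independent while $\int (1-\Pi)f\,dv=0$. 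Using $\int v_iv_j\,F\,dv=\mF\,\delta_{ij}$ on the left, this yields the weak formulation
\[
\int \rhoF\,u_g\,u_h\,dx+\int \mF\,\nx u_g\cdot\nx u_h\,dx=-\iint (1-\Pi)f\,\bigl(v\otimes v:\nx^2 u_h\bigr)\,dx\,dv\,.
\]
Rotational invariance gives $\int v_iv_jv_kv_l\,F\,dv=\tfrac{M_F}{d(d+2)}(\delta_{ij}\delta_{kl}+\delta_{ik}\delta_{jl}+\delta_{il}\delta_{jk})$, whence $\int(v\otimes v:\xi)^2 F\,dv\le\tfrac{M_F}{d}|\xi|^2$ for every symmetric tensor $\xi$. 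Cauchy--Schwarz in $L^2(d\mu)$ then bounds the right-hand side by $\tfrac{1}{\sqrt d}\,\|(1-\Pi)f\|\;\|\nx^2 u_h\|_{L^2(M_F\,dx)}$.

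The duality step closes the estimate using \eqref{regularity-est}. Fix an arbitrary $\phi\in L^2(\rhoF\,dx)$ and let $u_\phi$ be the unique weak solution of \eqref{eq:u_g} with right-hand side $w=\phi$; existence in the natural space $\{u\in L^2(\rhoF\,dx):\,\nx u\in L^2(\mF\,dx)\}$ follows from Lax--Milgram applied to the symmetric coercive bilinear form $B(u,v):=\int\rhoF\,u\,v\,dx+\int\mF\,\nx u\cdot\nx v\,dx$, and \eqref{regularity-est} gives $\|\nx^2 u_\phi\|_{L^2(M_F\,dx)}\le C\,\|\phi\|_{L^2(\rhoF\,dx)}$. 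Plugging $u_h=u_\phi$ into the weak form and using the symmetry $B(u_g,u_\phi)=B(u_\phi,u_g)$ together with the defining property $B(u_\phi,u_g)=\int\rhoF\,\phi\,u_g\,dx$, the left-hand side reduces exactly to $\int\rhoF\,\phi\,u_g\,dx$, while the right-hand side is bounded by $\tfrac{C}{\sqrt d}\,\|(1-\Pi)f\|\,\|\phi\|_{L^2(\rhoF\,dx)}$. Taking the supremum over $\phi$ of unit norm in $L^2(\rhoF\,dx)$ yields $\|u_g\|_{L^2(\rhoF\,dx)}\le\tfrac{C}{\sqrt d}\,\|(1-\Pi)f\|$, hence the boundedness of $\A\,\T\,(1-\Pi)$ on $\mathcal H$.

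The main technical obstacle I anticipate is the rigorous justification of the weak formulation: working on a dense class of test functions so that the integration by parts producing $\T(v\cdot\nx u_h)$ is legitimate, that the cancellation of the $\nx V$ term is valid despite only measurability of $V$, and that $u_g$ indeed lives in the weighted Sobolev space on which \eqref{regularity-est} is posed. Once these functional-analytic points are under control, the content of the lemma really lies in using \eqref{regularity-est} to trade the bound in $\|\nx^2 u_h\|_{L^2(M_F\,dx)}$ for a bound in $\|\phi\|_{L^2(\rhoF\,dx)}$, which by duality delivers the sought $L^2(\rhoF\,dx)$ estimate on $u_g$.
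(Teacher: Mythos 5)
Your proof is correct and is essentially the paper's own argument in variational dress: the paper passes to the explicit adjoint $[\A\,\T\,(1-\Pi)]^* = -(1-\Pi)\,\T^2\,\Pi\,[1+(\T\,\Pi)^*(\T\,\Pi)]^{-1}$, identifies the resolvent equation with \eqref{eq:u_g}, uses the same cancellation of the $\nx V\cdot\nx u$ term under $(1-\Pi)$ (your zero $v$-average observation), and then invokes \eqref{regularity-est}, which is exactly what your weak-formulation-plus-duality step does with $u_\phi$ playing the role of the adjoint's elliptic solution. Your fourth-moment computation just makes explicit the constant the paper leaves as a generic $c$ in the bound $\|F\,v\cdot\nx(v\cdot\nx u)\| \le c\,\|\nx^2 u\|_{L^2(M_F\,dx)}$.
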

\begin{proof} The operator $\A\,\T\,(1-\Pi)$ is bounded if and only if its adjoint
\[
[\A\,\T\,(1-\Pi)]^* = -(1-\Pi)\,\T^2\,\Pi\,[1+(\T\,\Pi)^*(\T\,\Pi)]^{-1}
\]
is bounded. If $g=[1+(\T\,\Pi)^*(\T\,\Pi)]^{-1}f$, then
\[
[\A\,\T\,(1-\Pi)]^*f = -(1-\Pi)\,\T^2\,\Pi\,g \quad\mbox{with}\quad g + (\T\,\Pi)^*(\T\,\Pi)\,g = f\,,
\]
where the latter implies \eqref{eq:u_g} for $u=u_g = \rho_g/\rhoF$ and $w=u_f = \rho_f/\rhoF$. Then
\[
\T^2\,\Pi\,g = F\,v\cdot\nx (v\cdot\nx u) + F\,\nx V\cdot\nx u
\]
results in
\[
[\A\,\T\,(1-\Pi)\,]^*f = -(1-\Pi)\,[F\,v\cdot\nx (v\cdot\nx u)]\,.
\]
This implies that for some positive constant $C$, we have
\[
\|[\A\,\T\,(1-\Pi)]^*f\| \le \|F\,v\cdot\nx (v\cdot\nx u)\| \le c\,\|\nx^2 u\|_{L^2(M_F\,dx)}\,,
\]
which completes the proof using $\|u_f\|_{L^2(\rhoF\,dx)} = \|\Pi\,f\|$. \end{proof}

\section{A framework for the elliptic regularity estimate}\label{sec:regularity}

Our goal is to give conditions on $V$ which are sufficient to establish the existence of a positive constant $C$ as in Lemma~\ref{lemma2}. ÊIn the applications considered below, the combination of weights $M_F\,\rhoF/\mF^2$ is constant. This motivates the notations
\[
w_0^2 := \rhoF\,,\quad w_i^2 := {\textstyle\big(\frac{\mF}\rhoF \big)}^i w_0^2\quad\mbox{with}\quad i=1,\,2\,,\quad \|u\|_i := \|u\,w_i\|_{L^2(\R^d)}\,.
\]
With $\rho = u\rhoF$, the Poincar\'e inequality in {\bf (H3)} can then be rewritten as
\be{Poincare1}
\|\nx u\|_1^2 \ge \lambda_M\,\| u\|_0^2
\ee
under the zero average condition $\intRd u\,\rhoF\,dx=0$, and the desired estimate \eqref{regularity-est}~is
\be{u-est1}
\|\nx^2 u\|_2 \le C\,\|u_f\|_0
\ee
for the solution of
\be{u-equ1}
w_0^2\,u - \nx\cdot (w_1^2\,\nx u) = w_0^2\,u_f\,.
\ee
Roughly speaking we just have to prove $(L^2\to H^2)$-regularization for a second order elliptic equation. However, different norms have to be taken into account. The result can only be shown under certain assumptions on the weights, which will later be translated into assumptions on the confining potential:
\be{V-ass1}
\exists\,c_1 > 0\,,\; c_2\in[0,1)\;\mbox{such that}\; -\,w_1^2\,\Delta_x (\log w_1) \le c_1\,w_0^2 + c_2\,\left|\nx w_1 \right|^2 ,
\ee
\be{V-ass2}
\exists\,c_3 > 0\;\mbox{such that}\; \frac{w_1}{w_0}\,|\nx\,W| \le c_3\,\left(1 + \frac{|\nx w_1|}{w_0} \right) \;\mbox{with }\; W := \sqrt{1 +\,\Big| \frac{\nx w_1}{w_0} \Big|^2}\,,
\ee
\be{V-ass3}
\exists\,c_4 > 0\;\mbox{such that}\; \left| \nx \Big(\frac{w_1}{w_0} \Big) \right| \le c_4\,\frac{|\nx w_1|}{w_0}\,,
\ee
\be{H7}
\|W\|_0^2=\intRd W^2\,\rhoF\,dx<\infty\,.
\ee
Note that a condition on the third weight function $w_2$ could be deduced from \eqref{V-ass1}-\eqref{V-ass2}-\eqref{V-ass3} since any two of the weight functions determine the last one. The goal of this section is to prove the following $H^2$-regularity estimate.
\begin{proposition}\label{Lem:BK}
Let \eqref{Poincare1}, \eqref{V-ass1}, \eqref{V-ass2}, \eqref{V-ass3}, and \eqref{H7} hold. Then the solution~$u$ of \eqref{u-equ1} satisfies \eqref{u-est1}.
\end{proposition}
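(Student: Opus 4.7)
The plan is a weighted elliptic regularity argument in three steps: a basic energy estimate, an $L^2$-type bound on a weighted Laplacian of $u$ extracted from the PDE, and a Bochner-type identity that converts the Laplacian bound into the desired Hessian bound. First, testing \eqref{u-equ1} against $u$ and integrating by parts gives
\[
\|u\|_0^2+\|\nx u\|_1^2=\intRd w_0^2\,u\,u_f\,dx\le\|u\|_0\,\|u_f\|_0\,,
\]
so $\|u\|_0\le\|u_f\|_0$ and $\|\nx u\|_1\le\|u_f\|_0$. Reading \eqref{u-equ1} pointwise yields $-\nx\cdot(w_1^2\,\nx u)=w_0^2(u_f-u)$ and hence $\|w_0^{-1}\nx\cdot(w_1^2\,\nx u)\|_{L^2(\R^d)}\le 2\,\|u_f\|_0$.

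Next, expanding $\nx\cdot(w_1^2\,\nx u)=w_1^2\,\Delta_x u+2\,w_1\,\nx w_1\cdot\nx u$, dividing by $w_0$, and squaring yields
\[
\intRd\frac{w_1^4}{w_0^2}|\Delta_x u|^2\,dx\le 2\intRd\frac{|\nx\cdot(w_1^2\,\nx u)|^2}{w_0^2}\,dx+8\intRd\frac{w_1^2\,|\nx w_1|^2}{w_0^2}|\nx u|^2\,dx\,.
\]
The key new quantity is the cross-weighted gradient integral on the right. Using the identity $w_1^2\,\Delta_x\log w_1=\tfrac12\Delta_x(w_1^2)-2|\nx w_1|^2$, hypothesis \eqref{V-ass1} is equivalent to the pointwise bound $(2-c_2)|\nx w_1|^2\le c_1\,w_0^2+\tfrac12\Delta_x(w_1^2)$. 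I would multiply this inequality by $(w_1^2/w_0^2)|\nx u|^2$, integrate, transfer the $\Delta_x(w_1^2)$ term by integration by parts onto derivatives of $|\nx u|^2$ and of $w_1^2/w_0^2$, and close by Cauchy--Schwarz, absorbing a small controlled fraction of $\|\nx^2 u\|_2^2$. The commutator terms that arise carry derivatives of $w_1/w_0$ and of $W=\sqrt{1+|\nx w_1/w_0|^2}$; \eqref{V-ass2} and \eqref{V-ass3} provide precisely the pointwise control needed to bound them in terms of the target integral and of the already-controlled energy, while the strict inequality $c_2<1$ is essential for absorption.

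Finally, with $\varphi:=w_2^2=w_1^4/w_0^2$, a double integration by parts yields the Bochner-type identity
\[
\intRd\varphi\,|\nx^2 u|^2\,dx=\intRd\varphi\,(\Delta_x u)^2\,dx+\tfrac12\intRd\Delta_x\varphi\,|\nx u|^2\,dx+\intRd\nx\varphi\cdot\nx u\,\Delta_x u\,dx\,.
\]
The first right-hand term is controlled by the previous step. For the remaining two, \eqref{V-ass2} and \eqref{V-ass3} provide pointwise estimates of $\nx\varphi$ and $\Delta_x\varphi$ in terms of $w_1^2$, $|\nx w_1|^2$, and $w_0^2$; after Cauchy--Schwarz on the cross term, a small fraction of $\|\nx^2 u\|_2^2$ is again absorbed and the remainder reduces to the already-controlled quantities. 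Assumption \eqref{H7} and the Poincar\'e inequality \eqref{Poincare1} enter at the level of justification, ensuring uniqueness of $u$ in the natural weighted energy class and allowing approximation by smooth compactly supported functions so that all integrations by parts are rigorous. The main obstacle is the second step: since $|\nx w_1|/w_0$ need not be pointwise bounded, the cross-weighted gradient integral cannot be handled by the energy alone, and one must exploit \eqref{V-ass1} as a Bakry--\'Emery-type curvature bound on $w_1$, carefully tracking all commutators coming from \eqref{V-ass2} and \eqref{V-ass3} and closing the resulting system of inequalities with constants compatible with $c_2<1$.
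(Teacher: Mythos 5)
Your outline correctly identifies the main difficulty — controlling the cross-weighted gradient integral $Y:=\intRd w_1^2\,|\nx u|^2\,|\nx w_1|^2/w_0^2\,dx$ when $|\nx w_1|/w_0$ is unbounded — but the route you propose to control it does not close. You want to multiply the rewritten form of \eqref{V-ass1}, namely $(2-c_2)|\nx w_1|^2\le c_1\,w_0^2+\tfrac12\Delta_x(w_1^2)$, by $(w_1^2/w_0^2)|\nx u|^2$ and integrate by parts the $\Delta_x(w_1^2)$ term. Doing so produces two pieces. The piece with $\nx(w_1^2)\cdot\nx(w_1^2/w_0^2)\,|\nx u|^2$ is, by \eqref{V-ass3}, bounded in modulus by $2\,c_4\,Y$, so absorption into the left-hand side requires $2\,c_4<2-c_2$, a constraint that is nowhere assumed. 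The piece with $(w_1^2/w_0^2)\,\nx(w_1^2)\cdot\nx|\nx u|^2$ is bounded by $c\,\|\nx^2 u\|_2\,Y^{1/2}$, so after Young's inequality you obtain $Y\lesssim\|u_f\|_0^2+\|\nx^2 u\|_2^2$ with fixed (not small) constants. Feeding this back into the Bochner identity gives $\|\nx^2 u\|_2^2\le\intRd\varphi\,(\Delta_x u)^2\,dx+C\,\|\nx^2 u\|_2\,Y^{1/2}\lesssim\|u_f\|_0^2+Y+\|\nx^2 u\|_2\,Y^{1/2}$, and substituting $Y\lesssim\|u_f\|_0^2+\|\nx^2 u\|_2^2$ produces $\|\nx^2 u\|_2^2\lesssim\|u_f\|_0^2+\|\nx^2 u\|_2^2$ with a constant on the right that you cannot make less than one. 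The Laplacian-then-Bochner detour is therefore circular here.

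The deeper issue is your claim that the Poincar\'e inequality \eqref{Poincare1} enters ``only at the level of justification.'' It does not: it is an essential a priori ingredient. The paper first proves two improved Poincar\'e inequalities. The first, under \eqref{Poincare1} and \eqref{V-ass1} and using the zero-mean of $u$, gives $\|\nx u\|_1^2\ge\kappa\,\|u\,\nx w_1/w_0\|_0^2$; the second, under \eqref{V-ass2} and \eqref{H7} as well, gives $\|W\nx u\|_1^2\ge\kappa'\,\|W^2 u\|_0^2$. Feeding these into the energy identity obtained by testing \eqref{u-equ1} against $u\,W^2$ yields the improved $H^1$-bound $\|W\,\nx u\|_1\le C\,\|u_f\|_0$, which controls $Y$ directly without any recourse to $\|\nx^2 u\|_2$. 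Only then does the paper test \eqref{u-equ1} against $\nx\cdot(\nx u\,w_1^2/w_0^2)$; two integrations by parts produce $\|\nx^2 u\|_2^2$ on one side and five integrals involving only first derivatives of the weights on the other, each of which is bounded by $\|u_f\|_0\,\|u_f\|_0$ or $\|u_f\|_0\,\|\nx^2 u\|_2$ using \eqref{V-ass3} and the already-established $\|W\nx u\|_1\lesssim\|u_f\|_0$. This single-step test function avoids the Laplacian intermediary and the circularity that defeats your plan. Your Bochner identity itself is correct and could also be made to work \emph{once} the bound $\|W\nx u\|_1\lesssim\|u_f\|_0$ is in hand; the genuine gap is that you try to obtain that bound from \eqref{V-ass1} by absorption rather than from \eqref{Poincare1} and the zero-mean condition.
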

By Lemma~\ref{lemma2}, this shows that the operator $\A\,\T\,(1-\Pi)$ is bounded.

\subsection{Improved Poincar\'e inequalities}

We start with an improvement of the Poincar\'e inequality \eqref{Poincare1}.
\begin{lemma} \label{lem-PMi}
Let \eqref{Poincare1} and \eqref{V-ass1} hold. There exists $\kappa>0$ such that
\be{Poincare1-improved}
\| \nx u \|_1^2 \ge \kappa\,\Big\| u\,\frac{\nx w_1}{w_0} \Big\|_0^2\quad\mbox{for any}\; u\in L^2(\rhoF\,dx)\;\mbox{with}\;\intRd u\,\rhoF\,dx=0\,.
\ee
\end{lemma}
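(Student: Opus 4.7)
The plan is to obtain the improved Poincaré inequality by integration by parts, using (\ref{V-ass1}) to control the term coming from the Laplacian of $w_1$, then closing with the original Poincaré inequality (\ref{Poincare1}), which is where the zero-average hypothesis is consumed.

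First I would rewrite the target quantity in the natural form
\[
\Big\| u\,\frac{\nabla_x w_1}{w_0}\Big\|_0^2 = \intRd u^2\,|\nabla_x w_1|^2\,dx,
\]
and then integrate by parts, writing $u^2|\nabla_x w_1|^2 = u^2\,\nabla_x w_1\cdot \nabla_x w_1$ and moving one gradient onto $u^2 w_1$. This produces
\[
\intRd u^2\,|\nabla_x w_1|^2\,dx = -\,2\intRd u\,w_1\,\nabla_x u\cdot \nabla_x w_1\,dx - \intRd u^2\,w_1\,\Delta_x w_1\,dx.
\]
Next, I would rewrite assumption (\ref{V-ass1}) using the identity $w_1^2\,\Delta_x(\log w_1) = w_1\,\Delta_x w_1 - |\nabla_x w_1|^2$ to get the pointwise bound
\[
-\,w_1\,\Delta_x w_1 \le c_1\,w_0^2 + (c_2-1)\,|\nabla_x w_1|^2,
\]
and insert this into the last display, which absorbs a $(1-c_2)$-fraction of $\int u^2|\nabla_x w_1|^2 dx$ back to the left side.

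For the remaining cross term, I would apply Cauchy--Schwarz followed by Young's inequality with a parameter $\eta>0$:
\[
\Big|\,2\intRd u\,w_1\,\nabla_x u\cdot \nabla_x w_1\,dx\Big| \le \eta\intRd u^2\,|\nabla_x w_1|^2\,dx + \frac{1}{\eta}\,\|\nabla_x u\|_1^2.
\]
Choosing, e.g., $\eta = (2-c_2)/2$ (which is positive by $c_2\in[0,1)$) this yields
\[
\frac{2-c_2}{2}\intRd u^2\,|\nabla_x w_1|^2\,dx \le \frac{2}{2-c_2}\,\|\nabla_x u\|_1^2 + c_1\,\|u\|_0^2.
\]
Finally I would use the hypothesis $\int u\,\rho_F\,dx = 0$ to apply the original Poincaré inequality (\ref{Poincare1}) and estimate $c_1\|u\|_0^2 \le (c_1/\lambda_M)\,\|\nabla_x u\|_1^2$. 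Collecting constants produces the desired $\kappa>0$.

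The only subtle point is justifying the integration by parts, since no boundary terms or density of smooth compactly supported functions have been stated. In the intended applications (where $w_0,w_1$ decay rapidly at infinity) this is standard, and I would handle it by a routine truncation/approximation argument on $u$, discarding boundary terms in the limit; this is the only step that is not a direct calculation, and it is not really an obstacle but does need to be flagged.
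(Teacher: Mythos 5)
Your proof is correct and follows essentially the same route as the paper's: integrate by parts once to make $\Delta_x w_1$ appear, convert to $\Delta_x(\log w_1)$ so that \eqref{V-ass1} applies, and close with the Poincar\'e inequality \eqref{Poincare1}. The only real difference is in how the cross term is organized: the paper writes $w_1\,\nx u = \nx(w_1\,u) - u\,\nx w_1$, expands the square, and simply discards the nonnegative term $\|\nx(w_1 u)\|_{L^2(dx)}^2$, which avoids Young's inequality and gives $\kappa=\lambda_M(1-c_2)/(\lambda_M+c_1)$ directly; you instead start from $\int u^2|\nx w_1|^2\,dx$ and pay for the cross term with Young's inequality, which is equally valid and yields a comparable explicit constant. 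Your remark about justifying the integration by parts by truncation is appropriate; the paper is equally terse on this point.
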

\begin{proof} With the identity $w_1\nx u = \nx (w_1\,u) - u\,\nx w_1$, the inequality
\begin{eqnarray*}
\| \nx u \|_1^2 &\ge& \intRd u^2\,|\nx w_1|^2\,dx - 2 \intRd u\,\nx w_1 \cdot \nx (u\,w_1)\,dx \\
&=& \Big\| u\,\frac{\nx w_1}{w_0} \Big\|_0^2 + \intRd u^2\,w_1^2\,\Delta_x (\log w_1)\,dx
\end{eqnarray*}
is easily derived. Now \eqref{Poincare1} and \eqref{V-ass1} imply
\[
\| \nx u \|_1^2 \ge (1-c_2)\,\Big\| u\,\frac{\nx w_1}{w_0} \Big\|_0^2 - \frac{c_1}{\lambda_M}\,\| \nx u \|_1^2\,.
\]
This completes the proof with $\kappa = \lambda_M\,(1-c_2)/(\lambda_M+c_1)$. \end{proof}

\begin{lemma}\label{lem-PMhati}
Let \eqref{Poincare1}, \eqref{V-ass1}, \eqref{V-ass2} and \eqref{H7} hold. There exists $\kappa'>0$ such that
\[
\label{Poincare1-improved2}
\| W\,\nx u \|_1^2 \ge \kappa'\,\Big\| W\,u\,\frac{\nx w_1}{w_0} \Big\|_0^2\quad\mbox{for any}\; u\in L^2(\rhoF\,dx)\;\mbox{with}\;\intRd u\,\rhoF\,dx=0\,.
\]
\end{lemma}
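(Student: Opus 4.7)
The plan is to reduce to the improved Poincar\'e inequality of Lemma \ref{lem-PMi} by applying it to the zero-mean shift $v := Wu - \bar v$ of $Wu$, where $\bar v := (\intRd \rhoF\,dx)^{-1}\,\intRd Wu\,\rhoF\,dx$. Since $\intRd v\,\rhoF\,dx = 0$, Lemma \ref{lem-PMi} gives
\[
\|\nx v\|_1^2 \ge \kappa\,\|v\,\nx w_1/w_0\|_0^2\,.
\]
It then remains to swap $\nx v$ for $W\,\nx u$ on the left, and $v$ for $Wu$ on the right, and to control both errors.

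For the left-hand side, the product rule gives $\nx v = W\,\nx u + u\,\nx W$, so from $(a+b)^2 \le 2\,a^2 + 2\,b^2$,
\[
\|\nx v\|_1^2 \le 2\,\|W\,\nx u\|_1^2 + 2\,\|u\,\nx W\|_1^2\,.
\]
The error $\|u\,\nx W\|_1^2$ is handled by Assumption \eqref{V-ass2}, which yields $w_1^2\,|\nx W|^2 \le 2\,c_3^2\,(w_0^2 + |\nx w_1|^2)$, hence
$\|u\,\nx W\|_1^2 \le 2\,c_3^2\,(\|u\|_0^2 + \|u\,\nx w_1/w_0\|_0^2)$.
Both summands are bounded by multiples of $\|\nx u\|_1^2$ by \eqref{Poincare1} and by Lemma \ref{lem-PMi} itself. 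Since $W\ge 1$, this eventually produces $\|u\,\nx W\|_1^2 \le C_1\,\|W\,\nx u\|_1^2$ with an explicit constant $C_1$ depending on $c_3$, $\lambda_M$ and $\kappa$.

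For the right-hand side, $(Wu)^2 \le 2\,v^2 + 2\,\bar v^2$ yields
\[
\|Wu\,\nx w_1/w_0\|_0^2 \le 2\,\|v\,\nx w_1/w_0\|_0^2 + 2\,\bar v^2\,\intRd |\nx w_1|^2\,dx\,.
\]
Assumption \eqref{H7} makes $\intRd |\nx w_1|^2\,dx = \|W\|_0^2 - 1$ finite, and, using the zero-mean of $u$ to write $\bar v = (\int\rhoF)^{-1}\intRd (W-1)\,u\,\rhoF\,dx$, Cauchy--Schwarz (again invoking \eqref{H7}) gives $\bar v^2 \le C\,\|u\|_0^2$, which is controlled by $\|\nx u\|_1^2/\lambda_M$ via \eqref{Poincare1}, and therefore by $\|W\,\nx u\|_1^2/\lambda_M$.

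Combining the two swaps, every error term is bounded by a multiple of $\|W\,\nx u\|_1^2$ and can be absorbed to the left-hand side, yielding the claimed inequality with an explicit $\kappa' > 0$. The main obstacle, and the reason the four hypotheses \eqref{Poincare1}, \eqref{V-ass1}, \eqref{V-ass2} and \eqref{H7} are all needed, is to make sure that the three error channels (the gradient error $u\,\nx W$, the mean error $\bar v$, and the improved Poincar\'e applied inside these bounds) combine in a way that, after the final absorption, leaves a strictly positive constant rather than a tautology.
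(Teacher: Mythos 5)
Your proposal is correct and follows essentially the same route as the paper: apply Lemma~\ref{lem-PMi} to the zero-mean shift of $W\,u$, expand $\nx(W\,u)=W\,\nx u+u\,\nx W$ and control the $u\,\nx W$ term via \eqref{V-ass2} together with \eqref{Poincare1} and Lemma~\ref{lem-PMi}, then handle the mean $\overline{v}$ by Cauchy--Schwarz using \eqref{H7} and the zero mean of $u$. The paper organizes the cross-terms a bit differently (it expands $\|(u\,W-\overline u)\,W\|_0^2$ exactly and uses a parametrized Young inequality with $a=2\,\|W\|_0\,\|u\|_0$, and it carries the slightly stronger weight $W^2$ rather than $W\,\nx w_1/w_0$ on the right), but the decomposition, the role of each hypothesis, and the final absorption are the same as in your argument.
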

\begin{proof}
We apply \eqref{Poincare1-improved} with $u$ replaced by $(u\,W-\overline u)$ with $\overline u:=\intRd u\,W\,\rhoF\,dx$. We recall that $\intRd\rhoF\,dx=\iint_{\R^d\times\R^d}F\,dv\,dx=1$. We thus obtain
\[
\kappa\,\big\|(u\,W-\overline u)\,W\big\|_0^2\le\|\nx(W\,u)\|_1^2\,.
\]
By expanding the left hand side, we get
\[
\kappa\,\|W^2\,u\|_0^2\le\|\nx(W\,u)\|_1^2+2\,\kappa\intRd W^3\,u\, w_0^2\,dx\;\|W\|_0\,\|u\|_0\,.
\]
Using
\[
\intRd W^2\,u\,W\,\rhoF\,dx\leq \frac 1{2\,a}\,\|W^2\,u\|_0^2+\frac a2\,\|W\|_0^2
\]
with $a:=2\,\|W\|_0\,\|u\|_0$, we obtain
\begin{eqnarray*}
2\,\kappa\intRd W^3\,u\, w_0^2\,dx\;\|W\|_0\,\|u\|_0&\leq& \kappa\,a\left(\frac 1{2\,a}\,\|W^2\,u\|_0^2+\frac a2\,\|W\|_0^2
\right)\\
&&=\frac \kappa 2\,\|W^2\,u\|_0^2+2\,\kappa\,\|W\|_0^4\,\|u\|_0^2\,.
\end{eqnarray*}
On the other hand, we can also expand the square in $\|\nx(W\,u)\|_1^2$
\[
\|\nx(W\,u)\|_1^2\le 2\,\|W\,\nx u\|_1^2 + 2\,\Big\| u\,\nx W\,\frac{w_1}{w_0}\,\Big\|_0^2\,,
\]
and by \eqref{V-ass2}, get
\[
\Big\| u\,\nx W\,\frac{w_1}{w_0}\,\Big\|_0^2\le c_3^2\,\Big\| u\,\left({\textstyle 1 + \frac{|\nx w_1|}{w_0} } \right)\Big\|_0^2\le 2\,c_3^2\,\|W\,u\|_0^2\,.
\]
Collecting all terms, we finally end up with
\[
\kappa\,\|W^2\,u\|_0^2\le 2\,\|W\,\nx u\|_1^2+4\,c_3^2\,\|W\,u\|_0^2+\frac\kappa 2\,\|W^2\,u\|_0^2 +2\,\kappa\,\|W\|_0^4\,\|u\|_0^2\,.
\]
which, using \eqref{Poincare1}, \eqref{Poincare1-improved} and $W\ge 1$ establishes the inequality:
\[
\frac\kappa 2\,\|W^2\,u\|_0^2\le \Big(2+4\,\frac{c_3^2}\kappa+2\,\frac{\kappa}{\lambda_M}\,\|W\|_0^4\Big)\,\|W\,\nx u\|_1^2\,.
\]
\end{proof}

\subsection{The regularity estimate}

Now we start working on Equation \eqref{u-equ1}. The standard energy estimate gives
\begin{equation}\label {BasicEnergyEstimate}
\| u \|_0^2 + \| \nx u\|_1^2 \le \| u_f \|_0^2\,.
\ee
With $W = \sqrt{1 + |\nx w_1|^2/w_0^2} $\,, Lemma \ref{lem-PMi} leads to the improved $L^2$-estimate
\[
\label{energy-improved}
\| u\,W\|_0 \le \frac 1\kappa\,\| u_f \|_0\,.
\]
Lemma \ref{lem-PMi} can also be used to get an improved $H^1$-estimate.
\begin{lemma} \label{lem-energy-improved}
Let \eqref{Poincare1}, \eqref{V-ass1}, \eqref{V-ass2} and \eqref{H7} hold. Then any solution of \eqref{u-equ1} such that $\intRd u\,\rhoF\,dx=0$ satisfies
\[
\|W\,\nx u \|_1 \le C\,\| u_f \|_0\,.
\]
\end{lemma}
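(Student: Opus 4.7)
The plan is to upgrade the basic energy estimate \eqref{BasicEnergyEstimate} by testing \eqref{u-equ1} against $W^2\,u$ rather than $u$. Integration by parts produces the identity
\[
\|W\,u\|_0^2 + \|W\,\nx u\|_1^2 + 2\intRd W\,u\,w_1^2\,\nx W\cdot\nx u\,dx = \intRd W^2\,w_0^2\,u\,u_f\,dx\,,
\]
which isolates the target norm $\|W\,\nx u\|_1^2$ on the left, together with a commutator term reflecting the $x$-dependence of the weight $W$.

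The commutator is handled using \eqref{V-ass2}: since $W\ge 1$ and $|\nx w_1|/w_0\le W$, the assumption yields $w_1\,|\nx W|\le 2\,c_3\,W\,w_0$, so Cauchy--Schwarz bounds the commutator by a multiple of $\|W^2\,u\|_0\,\|\nx u\|_1$. The basic estimate \eqref{BasicEnergyEstimate} gives in turn $\|\nx u\|_1\le \|u_f\|_0$. The source term on the right-hand side is bounded directly by $\|W^2\,u\|_0\,\|u_f\|_0$ via Cauchy--Schwarz, so both contributions take the form $C_0\,\|W^2\,u\|_0\,\|u_f\|_0$ for an explicit constant $C_0$.

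The key step is then to re-express $\|W^2\,u\|_0$ in terms of quantities already controlled. Using the algebraic identity $W^4 = W^2 + W^2\,|\nx w_1|^2/w_0^2$, one decomposes
\[
\|W^2\,u\|_0^2 = \|W\,u\|_0^2 + \Big\|W\,u\,\tfrac{\nx w_1}{w_0}\Big\|_0^2\,.
\]
The first summand is bounded by a multiple of $\|u_f\|_0^2$ through the improved $L^2$-estimate derived from Lemma~\ref{lem-PMi} (together with \eqref{BasicEnergyEstimate}); the second is bounded by $(\kappa')^{-1}\,\|W\,\nx u\|_1^2$ by Lemma~\ref{lem-PMhati}. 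Both applications rest on the zero-average condition $\intRd u\,\rhoF\,dx=0$, which is assumed for $u$ and transferred to $u_f$ by integrating \eqref{u-equ1} against $1$.

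Substituting these bounds yields an inequality of the schematic form $\|W\,\nx u\|_1^2 \le A\,\|u_f\|_0^2 + B\,\|W\,\nx u\|_1\,\|u_f\|_0$; Young's inequality splits the cross term as $\tfrac12\,\|W\,\nx u\|_1^2$ plus a further multiple of $\|u_f\|_0^2$, and the former is absorbed on the left, giving $\|W\,\nx u\|_1\le C\,\|u_f\|_0$. The main obstacle is the commutator introduced by the weighted test function: its prefactor $\nx W$ is a priori unbounded, and it is precisely the role of \eqref{V-ass2} together with the finer Poincar\'e inequality of Lemma~\ref{lem-PMhati} to keep it under control and to make the absorption at the end possible.
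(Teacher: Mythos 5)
Your proof is correct and follows essentially the same route as the paper's: test \eqref{u-equ1} with $u\,W^2$, control the commutator via \eqref{V-ass2} and control $\|W^2 u\|_0$ via Lemmas~\ref{lem-PMi} and~\ref{lem-PMhati}. The only differences are cosmetic groupings (the paper bounds $\|u\,W^2\|_0$ entirely by $\|W\,\nx u\|_1$ so the final inequality is linear in $\|W\,\nx u\|_1$ and needs no Young absorption, whereas your split produces an extra $\|u_f\|_0^2$ term absorbed by Young's inequality).
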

\begin{proof}
Multiplication of \eqref{u-equ1} by $u\,W^2$ and integration gives
\be{energy-inequality}
\| u\,W\|_0^2 + \|W\,\nx u \|_1^2 \le \|u\,W^2\|_0\,\|u_f\|_0 - \intRd w_1^2\,u\,\nx u\cdot \nx (W^2)\,dx\,.
\ee
Since $W^2 \le 1 + W\,|\nx w_1|/w_0$, \eqref{Poincare1} and Lemma~\ref{lem-PMhati} imply
\be{more-Poincare}
\|u\,W^2\|_0^2 \le 2\,(1/\lambda_M+1/\kappa')\,\|W\,\nx u\|_1^2\,.
\ee
The integrand in the last term above can be estimated by
\[
w_0\,|u|\,w_1\,|\nx u|\,\frac{2\,w_1\,W\,|\nx W|}{w_0} \le 2\,\sqrt 2\,c_3\,w_0\,|u|\,w_1\,|\nx u|\,W^2
\]
with the help of \eqref{V-ass2}, so that the integral is bounded by
\[
2\,\sqrt 2\,c_3\,\|u\,W\|_0\,\|W\,\nx u\|_1 \le 2\,\sqrt 2\,\frac{c_3}\kappa\,\|u_f\|_0\,\|W\,\nx u\|_1\,.
\]
Combining our results gives
\[
\| W\,\nx u \|_1^2 \le C\,\| u_f \|_0\,\| W\,\nx u \|_1
\]
with $C=\sqrt{2\,(1/\lambda_M+1/\kappa')}+2\,\sqrt 2\,\frac{c_3}\kappa$, thus completing the proof. \end{proof}

\begin{proof}[Proof of Proposition \ref{Lem:BK}] We follow the standard procedure for proving $H^2$-regularity of the solutions of second order elliptic equations with $L^2$ right hand sides: multiply \eqref{u-equ1} with $\nx\cdot(\nx u\,w_1^2/w_0^2)$, and integrate by parts twice. We also use the consequence $w_2 = w_1^2/w_0$ of the relations between the weights:
\begin{multline*}
\|\nx^2 u\|_2^2 = \intRd w_0\,(u-u_f)\,w_2\,\Delta_x u\,dx + \intRd w_0^2\,(u-u_f)\,\nx u \cdot \nx \left(\frac{w_1^2}{w_0^2}\right)\,dx \\
- \intRd w_1^2\,\nx u\,\nx^2 u\,\nx \left(\frac{w_1^2}{w_0^2}\right)\,dx - \intRd \frac{w_1^2}{w_0^2}\,\nx u\,\nx^2 u\,\nx (w_1^2)\,dx \\
- \intRd \left[ \nx u \cdot \nx \left( \frac{w_1^2}{w_0^2} \right) \right] \left[ \nx u \cdot \nx \left( w_1^2 \right) \right] dx\\
= I_1 + I_2 + I_3 + I_4 + I_5\,.
\end{multline*}
The first integral is easily estimated:
\[
|I_1| \le \big(\|u\|_0 + \|u_f\|_0 \big)\,\|\nx^2 u\|_2 \le 2\,\|u_f\|_0\,\|\nx^2 u\|_2\,,
\]
by \eqref{BasicEnergyEstimate}. For the second integral we use \eqref{V-ass3} and Lemma \ref{lem-energy-improved}:
\begin{eqnarray*}
|I_2| &\le& 2\,c_4 \intRd w_0\,(u-u_f)\,w_1\,|\nx u|\,\frac{|\nx w_1|}{w_0}\,dx \\
&\le& 2\,c_4\,\|u-u_f\|_0\,\|W\,\nx u\|_1 \le 4\,c_4\,C\,\|u_f\|_0^2\,.
\end{eqnarray*}
With the third, fourth and fifth integrals we proceed similarly:
\begin{eqnarray*}
|I_3| &\le& 2\,c_4 \intRd w_1\,|\nx u|\,w_2\,|\nx^2 u|\,\frac{|\nx w_1|}{w_0}\,dx \\
&\le& 2\,c_4\,\|W\,\nx u\|_1\,\|\nx^2 u\|_2 \le 2\,c_4\,C\,\| u_f\|_0\,\|\nx^2 u\|_2\,,\\
|I_4| &\le& 2 \intRd w_1\,|\nx u|\,w_2\,|\nx^2 u|\,\frac{|\nx w_1|}{w_0}\,dx \le 2\,c_4\,C\,\| u_f\|_0\,\|\nx^2 u\|_2\,, \\
|I_5| &\le& 4 \intRd w_1^2\,|\nx u|^2\,\big| \nx \big( \frac{w_1}{w_0} \big)\big|\,\frac{|\nx w_1|}{w_0}\,dx \\
&& \le 4\,c_4 \intRd w_1^2\,|\nx u|^2\,\frac{|\nx w_1|^2}{w_0^2}\,dx \le 4\,c_4\,\| W\,\nx u \|_1^2 \le 4\,c_4\,C\,\| u_f \|_0^2\,.
\end{eqnarray*}
The combination of our results gives
\[
\|\nx^2 u\|_2^2 \le K\,\|u_f\|_0\,\Big( \|u_f\|_0 + \|\nx^2 u\|_2 \Big)
\]
for some explicit constant $K>0$, which completes the proof. \end{proof}

\section{Maxwellian equilibria}

When the local equilibrium is a Maxwellian distribution, the global equilibrium has the form
\be{Maxwell}
F(x,v) = \rhoF(V(x))\,\BGprofile(v)\,, \quad\mbox{with}\; \rhoF(V) = e^{-V}\quad\mbox{and}\;\BGprofile(v) = \frac{e^{-|v|^2/2}}{(2\,\pi)^{d/2}}\,.
\ee
In this framework, Assumption {\bf (H0)} is a consequence of
\par\medskip\noindent {\bf Assumption (H0.1)} The external potential $V\in C^2(\R^d)$ is such that $e^{-V}\in L^1(dx)$. \par\medskip
As far as the macroscopic coercivity condition {\bf (H2)} and the boundedness of $\A\,\T\,(1-\Pi)$, i.e. the first part of {\bf (H4)}, are concerned, no further details of the collision operator are required. Consider first the issue of equivalent conditions for {\bf (H2)}.

With $w:=\rho\,e^{V/2}$ the macroscopic coercivity condition \eqref{eq:macroSG} is equivalent to
\[
\intRd \Big[|\nx w|^2+\big(\tfrac 14\,|\nx V|^2-\tfrac 12\,\Delta V\big) w^2\,\Big] dx \ge \lambda_M\,\intRd w^2\,dx\,,
\]
under the orthogonality condition $\intRd w\,e^{-V/2}\,dx=0$. The first eigenvalue of the Schr\"odinger operator $\mathcal H:=-\Delta +\frac 14\,|\nx V|^2-\frac 12\,\Delta V$ is zero. It is non-degenerate, and the corresponding eigenfunction is $w=e^{-V/2}$. According to \cite{MR0133586}, Inequality~\eqref{eq:macroSG} holds if and only if the lower end of the continuous spectrum of $\mathcal H$ is positive, that~is
\par\medskip\noindent {\bf Assumption (H2.1)} \quad$\liminf_{|x|\to\infty}\left(|\nx V|^2-2\,\Delta V\right)>0$.\par\medskip
As a consequence, macroscopic coercivity holds if $\Delta V$ is negligible compared to $|\nx V|^2$ as $|x|\to\infty$, and if $\liminf_{|x|\to\infty}|\nx V|>0$. An example of such a potential is $V(x)=(1+|x|^2)^\beta$ for some $\beta\geq 1/2$. See for instance \cite[A.19. Some criteria for Poincar\'e inequalities, page. 137]{Mem-villani} for an elementary proof if $\lim_{|x|\to\infty}\left(|\nx V|^2-2\,\Delta V\right)=\infty$, and \cite{MR2386063} for some recent considerations on Poincar\'e inequalities when $e^{-V}$ is a probability measure.

Since all three weights $\rhoF$, $\mF$, and $M_F$ are constant multiples of $e^{-V}$, the framework of Section \ref{sec:regularity} can be used for the boundedness
of $\A\,\T\,(1-\Pi)$. Assumptions \eqref{V-ass1}, \eqref{V-ass2} are
satisfied if
\par\medskip\noindent {\bf Assumption (H4.1)} There exist constants $c_1 > 0$, $c_2\in[0,1)$, and $c_3>0$, such that
\[
\Delta_x V \le c_1 + \frac{c_2}{2}\,|\nx V|^2\,,\quad |\nx^2 V| \le c_3 \left(1 + |\nx V| \right)\,.
\]
Assumption \eqref{V-ass3} holds trivially (since $w_1/w_0=const$), and \eqref{H7} can be translated~Êto
\[
\intRd |\nx V|^2 e^{-V}\,dx < \infty\,,
\]
which follows from {\bf (H0.1)} and {\bf (H4.1)} by
\begin{eqnarray*}
\intRd |\nx V|^2 e^{-V}\,dx &=& -\intRd \nx V\cdot\nx e^{-V}\,dx = \intRd \Delta_x V\,e^{-V}\,dx \\
&\le& c_1 \intRd e^{-V}\,dx + \frac{c_2}{2} \intRd |\nx V|^2 e^{-V}\,dx\,.
\end{eqnarray*}

\subsection{BGK operator}
\label{BGK-Maxwell}

For the BGK collision operator
\[
\L\,= \Pi - 1\,,
\]
the microscopic coercivity condition {\bf (H1)} is trivially satisfied with $\lambda_m = 1$, and, since $\L$ is bounded (by 1), the boundedness of $\A\,\L$ follows from Lemma \ref{lemma1}.
\begin{theorem}\label{Thm:HerauImproved}
Let $\L=\Pi-1$, and let the external potential satisfy {\bf (H0.1)}, {\bf (H2.1)}, and {\bf (H4.1)}. Then solutions of \eqref{eq:base} with initial data in $L^2(d\mu)$ decay exponentially to the global equilibrium given by \eqref{Maxwell}.
\end{theorem}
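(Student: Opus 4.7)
The plan is to show that all four abstract hypotheses \textbf{(H1)}--\textbf{(H4)} of Theorem~\ref{theo:abstract} are satisfied in the Maxwellian BGK setting, so that the exponential decay follows immediately. The Hilbert space is $\mathcal H=\{f\in L^2(d\mu):\iint f\,dv\,dx=0\}$, with $F$ given by \eqref{Maxwell}, and the general discussion preceding Section~\ref{BGK-Maxwell} has already reduced each abstract hypothesis to an explicit condition on $V$; what remains is to collect these reductions.

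First I would dispatch \textbf{(H1)} and \textbf{(H3)}: for $\L=\Pi-1$ a direct computation gives $-\scalar{\L f}{f}=\nrm{(1-\Pi)f}^2$, so \textbf{(H1)} holds with $\lambda_m=1$; and \textbf{(H3)} follows from the identity $\T\,\Pi\,f=F\,v\cdot\nx u_f$, whose integral in $v$ vanishes by oddness. For \textbf{(H2)}, by Lemma~\ref{eq:macroSG} macroscopic coercivity is equivalent to the weighted Poincar\'e inequality \eqref{eq:macroSG} with $\mF$ and $\rhoF$ both proportional to $e^{-V}$. A standard ground-state substitution $w=\rho\,e^{V/2}$ rewrites this as a spectral-gap condition for the Witten-type Schr\"odinger operator $\mathcal H=-\Delta+\tfrac14|\nx V|^2-\tfrac12\Delta V$, whose ground state $e^{-V/2}$ lies at the bottom of the spectrum; by Persson's criterion~\cite{MR0133586} the existence of a gap is equivalent to \textbf{(H2.1)}, which is precisely the hypothesis at hand.

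The remaining hypothesis \textbf{(H4)} splits into two boundedness claims. For $\A\,\L$, since $\L=\Pi-1$ has operator norm bounded by $1$, Lemma~\ref{lemma1} gives $\nrm{\A\,\L\,f}\le\tfrac12\nrm{(1-\Pi)\L f}\le\nrm{(1-\Pi)f}$, so this piece is immediate. The boundedness of $\A\,\T\,(1-\Pi)$ is the substantial point: Lemma~\ref{lemma2} reduces it to the elliptic regularity estimate \eqref{regularity-est}, which in turn is provided by Proposition~\ref{Lem:BK} as soon as its hypotheses \eqref{Poincare1}, \eqref{V-ass1}, \eqref{V-ass2}, \eqref{V-ass3} and \eqref{H7} hold. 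Since all three weights $\rhoF,\mF,M_F$ are constant multiples of $e^{-V}$, the ratios $w_1/w_0$ are constant so \eqref{V-ass3} is trivial; \eqref{V-ass1} and \eqref{V-ass2} reduce directly to the two inequalities in \textbf{(H4.1)}; and \eqref{H7} amounts to $\intRd|\nx V|^2 e^{-V}dx<\infty$, which I would verify by the one-line integration-by-parts computation already sketched in the excerpt, combining \textbf{(H0.1)} with the first bound in \textbf{(H4.1)}.

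I expect the main conceptual obstacle to be \textbf{(H2)}: one must either cite Persson's theorem for the Schr\"odinger operator or carry out a direct IMS-type localization argument to deduce a positive spectral gap from the asymptotic lower bound in \textbf{(H2.1)}. The verification of \textbf{(H4)} is technically the longest step but is essentially bookkeeping, matching the weight-based hypotheses of Section~\ref{sec:regularity} against \textbf{(H4.1)}. Once all four abstract assumptions are in hand, a direct invocation of Theorem~\ref{theo:abstract} yields the exponential decay $\nrm{e^{t(\L-\T)}f_I}\le C\,e^{-\lambda t}\nrm{f_I}$ with explicit constants $\lambda,C$ expressible in terms of $\lambda_m=1$, the macroscopic gap $\lambda_M$ from \textbf{(H2.1)}, and the constant $C_M$ coming from \textbf{(H4.1)} via Proposition~\ref{Lem:BK}, concluding the proof.
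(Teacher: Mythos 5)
Your proposal is correct and follows essentially the same route as the paper: the paper's proof of Theorem~\ref{Thm:HerauImproved} is exactly the assembly you describe --- \textbf{(H1)} with $\lambda_m=1$ and boundedness of $\A\,\L$ from $\L=\Pi-1$ and Lemma~\ref{lemma1}, \textbf{(H2)} via the ground-state substitution and the criterion of \cite{MR0133586} under \textbf{(H2.1)}, and \textbf{(H4)} via Lemma~\ref{lemma2} and Proposition~\ref{Lem:BK} with the weight conditions reduced to \textbf{(H4.1)} and \eqref{H7} checked by integration by parts --- followed by an application of Theorem~\ref{theo:abstract}. No discrepancies worth noting.
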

This result is an improvement upon the work of H\'erau \cite{Herau}, since the requirements for the external potential are weaker. In particular, H\'erau's result requires potentials with at most quadratic growth at infinity, whereas an arbitrary superlinear growth is permitted by {\bf (H2.1)}, {\bf (H4.1)}.

\subsection{Fokker-Planck operator} For the Fokker-Planck collision operator
\[
\L\,f = \nv\cdot(\nv f + v\,f)\,,
\]
the microscopic coercivity condition {\bf (H1)} is equivalent to the Poincar\'e inequality for the Gaussian measure $\BGprofile(v)\,dv$, which satisfies
\begin{eqnarray*}
-\scalar{\L\,f}{f} &=& \intRd e^V \intRd \BGprofile \left| \nv \tfrac{(1-\Pi)\,f}{\BGprofile} \right|^2 dv\,dx \\
&\ge& \lambda_m \intRd e^V \intRd \BGprofile \left|\tfrac{(1-\Pi)\,f}{\BGprofile}\right|^2 dv\,dx = \lambda_m\,\|(1-\Pi)\,f\|^2\,.
\end{eqnarray*}
A somewhat surprising fact is the boundedness of $\A\,\L$, although $\L$ is an unbounded operator. Since $\A = -(1+(\T\,\Pi)^*(\T\,\Pi))^{-1} \Pi\,\T$ and $\Pi\,\T\,f = \frac{F}{\rhoF}\nx\cdot j_f$ where $j_f$ is the flux given by $j_f = \intRd v\,f\,dv$, the identity $j_{\L\,f} = -j_f$ implies $\A\,\L = -\A$, and the boundedness of $\A\,\L$ is a consequence of Lemma \ref{lemma1}.
\begin{theorem}
Let $\L\,f= \nv\cdot(\nv f + v\,f)$ and assume that the external potential satisfies {\bf (H0.1)}, {\bf (H2.1)}, and {\bf (H4.1)}. Then solutions of \eqref{eq:base} with initial data in $L^2(d\mu)$ decay exponentially to the global equilibrium given by \eqref{Maxwell}.
\end{theorem}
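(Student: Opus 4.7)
The plan is to apply the abstract Theorem~\ref{theo:abstract} in the Hilbert space $\mathcal H=\{f\in L^2(d\mu):\iint f\,dv\,dx=0\}$, so the task reduces to verifying Assumptions \textbf{(H1)}--\textbf{(H4)} for this $\L$, $\T$, and $\Pi$. Three of them are handled by arguments already visible in the excerpt: \textbf{(H1)} is the Gaussian Poincar\'e inequality for $\BGprofile(v)\,dv$, through the fibered computation
\[
-\scalar{\L f}{f}=\intRd e^V\intRd \BGprofile\,\Big|\nv\tfrac{(1-\Pi)f}{\BGprofile}\Big|^2\,dv\,dx\,;
\]
\textbf{(H2)} is the weighted Poincar\'e inequality for Maxwellian equilibria established at the opening of the section, equivalent via the Schr\"odinger reformulation and \cite{MR0133586} to \textbf{(H2.1)}; and \textbf{(H3)} is immediate from $\T\,\Pi\,f=F\,v\cdot\nx u_f$ being odd in $v$.

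All the real work lies in \textbf{(H4)}. Boundedness of $\A\,\T\,(1-\Pi)$ reduces by Lemma~\ref{lemma2} to the elliptic regularity estimate \eqref{u-est1}, which is precisely the content of Proposition~\ref{Lem:BK}. Its hypotheses specialize cleanly to the Maxwellian setting: since $\rhoF$, $\mF$, $M_F$ are all constant multiples of $e^{-V}$, the ratio $w_1/w_0$ is constant, so \eqref{V-ass3} is trivial; \eqref{V-ass1} and \eqref{V-ass2} translate respectively into the two inequalities postulated in \textbf{(H4.1)}; and \eqref{H7}, namely $\intRd|\nx V|^2 e^{-V}\,dx<\infty$, is the integration-by-parts computation already displayed in the section, using \textbf{(H0.1)} and \textbf{(H4.1)}.

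The pleasant surprise is that $\A\,\L$ is bounded even though $\L$ is not. From $\A=\Pi\,\A$ (proof of Lemma~\ref{lemma1}) together with the explicit identity $\Pi\,\T\,h=\tfrac{F}{\rhoF}\,\nx\!\cdot j_h$ in terms of the macroscopic flux $j_h:=\intRd v\,h\,dv$, one sees that $\A\,h$ depends on $h$ only through $j_h$. For the Fokker--Planck operator an integration by parts yields
\[
j_{\L f}=\intRd v\,\nv\!\cdot\!(\nv f+v f)\,dv=-\intRd(\nv f+v f)\,dv=-j_f\,,
\]
hence $\A\,\L=-\A$, and the bound $\nrm{\A\,\L\,f}\le\tfrac12\,\nrm{(1-\Pi)\,f}$ follows directly from Lemma~\ref{lemma1}. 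With all four assumptions in place, Theorem~\ref{theo:abstract} delivers the announced exponential decay in $L^2(d\mu)$. The only genuinely technical obstacle in the whole argument is the elliptic regularity underlying $\A\,\T\,(1-\Pi)$, but it has been completely absorbed into Proposition~\ref{Lem:BK}; beyond that, the proof amounts to reading off the preceding subsections.
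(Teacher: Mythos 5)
Your proposal is correct and follows the paper's own argument essentially verbatim: (H1) via the Gaussian Poincar\'e inequality, (H2) via {\bf (H2.1)}, boundedness of $\A\,\T\,(1-\Pi)$ via Lemma~\ref{lemma2} and Proposition~\ref{Lem:BK} with {\bf (H4.1)} (noting $w_1/w_0$ constant and the integration-by-parts bound giving \eqref{H7}), and boundedness of $\A\,\L$ through $j_{\L f}=-j_f$, hence $\A\,\L=-\A$ and Lemma~\ref{lemma1}. Nothing essential differs from the paper's proof.
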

The above assumptions are similar to those of \cite{Mem-villani}, which are weaker than those of \cite{Herau-Nier}. Moreover this result is an important improvement compared to \cite{Mem-villani} as involves an $L^2$ setting rather than a $H^1$ setting. Let us emphasize that the latter point is not a technical issue and answers an open question raised in \cite[Part~II, Section~13]{Mem-villani}).

\subsection{Scattering operators (without detailed balance)}

Consider a scattering operator that can be written as
\be{scattering}
(\L\,f)(v) = \intRd [k(v^*\to v)f(v^*) - k(v\to v^*)f(v)]\,dv^*\,, \quad k\ge 0
\ee
where $k(v^*\to v)$ denotes the transition probability of changing the velocity $v^*$ into $v$. Such an equation obviously conserves mass. Rotational symmetry can be enforced as a consequence of the assumption $k(Rv^*\to Rv) = k(v^*\to v)$, for all~$v$,~$v^*$, and for all rotation matrices $R$. \emph{Detailed balance} would mean that the integrand vanishes, whenever $f$ is a local equilibrium distribution. We shall only require that $\L\,\BGprofile = 0$. In the right hand side $f$ can be replaced by $(1-\Pi)\,f$. It has been shown in \cite{MR1803225} that in this case an H-theorem holds:
\[
-\intRd \L\,f \frac{f}{\BGprofile}\,dv = \frac 14\,\intRd \left( \frac{k(v^*\to v)}{\BGprofile} + \frac{k(v\to v^*)}{\BGprofile^*}\right)\BGprofile\,\BGprofile^* \left( \frac{f}{\BGprofile} - \frac{f^*}{\BGprofile^*}\right)^2\,dv^*\,dv
\,,
\]
where $f^*$ denotes $f(v^*)$. A sufficient condition for microscopic coercivity is
\par\medskip\noindent{\bf Assumption (H1.1)}\quad$\displaystyle{\frac{k(v^*\to v)}{\BGprofile} + \frac{k(v\to v^*)}{\BGprofile^*} \ge 2\lambda_m > 0}$.
\par\medskip Note that, because of $\L\,\BGprofile = 0$, the collision frequency $\nu(v) = \intRd k(v\to v^*)\,dv^*$ can be written as
\[
\nu(v) = \frac{1}{2} \intRd \left( \frac{k(v^*\to v)}{\BGprofile} + \frac{k(v\to v^*)}{\BGprofile^*}\right) \BGprofile^*\,dv^*\,.
\]
Thus, {\bf (H1.1)} implies $\nu(v)\ge \lambda_m$.

For proving the boundedness of $\A\,\L$, note that $g=\A\,\L\,f$ can be written as $g=u\,F$ with
\[
u\,e^{-V}-\nx\cdot\left(e^{-V}\nx u\right)=-\nx\cdot j_{\L\,f}\,.
\]
Multiplication by $u$ and integration gives
\[
\intRd u^2e^{-V}\,dx+\intRd |\nx u|^2\,e^{-V}\,dx=\intRd\nx u\cdot j_{\L\,f}\,dx\,.
\]
For the gain term $\L^+ f = \intRd k(v^*\to v)f^*\,dv^*$, by applying the Cauchy-Schwarz inequality twice, we get
\begin{eqnarray*}
|j_{\L^+ f}| &\le& \intRd |v| \left[\,{\intRd \frac{|f^*|^2}{\BGprofile^*}\,dv^* \intRd k(v^*\to v)^2 \BGprofile^*\,dv^*}\right]^\frac 12dv \\
&\le& \left[\,\intRd \frac{|f^*|^2}{\BGprofile^*}\,dv^* \iint_{\R^d\times\R^d} |v|^2\,k(v^*\to v)^2 \frac{\BGprofile^*}{\BGprofile}\,dv^*\,dv\right]^\frac 12.
\end{eqnarray*}
An analogous estimate for the loss term holds, so that we finally have
\[
\left|\intRd\nx u\cdot j_{\L\,f}\,dx\right| \le C \left(\intRd |\nx u|^2\,e^{-V}\,dx\right)^{1/2}\,\nrm f
\]
under the assumption
\par\medskip\noindent {\bf Assumption (H4.2)} \quad$\displaystyle{\iint_{\R^d\times\R^d} \left(|v|^2 + |v^*|^2\right) \,k\left(v^*\to v\right)^2\,\frac{\BGprofile^*}{\BGprofile} \,dv^*\,dv < \infty}$\,. \par\medskip
\noindent As a consequence, $\nrm{\A\,\L\,f}=\left(\intRd |u|^2\,e^{-V}\,dx\right)^{1/2 }\le C\,\nrm{f}$ and $\A\,\L$ is bounded. Combined with {\bf (H4.1)}, {\bf (H4.2)} shows that {\bf (H4)} holds.
\begin{theorem} Let $\L$ be given by \eqref{scattering}. If {\bf (H0.1)}, {\bf (H1.1)}, {\bf (H2.1)}, {\bf (H4.1)} and {\bf (H4.2)} hold, then solutions of \eqref{eq:base} with initial data in $L^2(d\mu)$ decay exponentially to the global equilibrium given by \eqref{Maxwell}.
\end{theorem}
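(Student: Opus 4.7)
The strategy is to apply Theorem~\ref{theo:abstract} in the Hilbert space $\mathcal H=\{f\in L^2(d\mu):\,\iint f\,dv\,dx=0\}$, so the proof reduces to verifying assumptions (H1)--(H4) for the scattering operator \eqref{scattering} and the transport operator $\T$, paired with the Maxwellian equilibrium \eqref{Maxwell}. Hypothesis (H0) is immediate from (H0.1) since $\BGprofile$ is a Gaussian probability density. For microscopic coercivity (H1), I would start from the H-theorem identity displayed in the excerpt, apply the pointwise bound (H1.1) to the symmetrized kernel to pull out a factor $\lambda_m$, and expand the square $\big(\tfrac{f}{\BGprofile}-\tfrac{f^*}{\BGprofile^*}\big)^2$; using $\int\BGprofile\,dv=1$ together with $\Pi f=\rho_f\BGprofile$ (valid for the Maxwellian since $F=e^{-V}\BGprofile$ and $\rhoF=e^{-V}$), the resulting integral collapses to exactly $\lambda_m\,\|(1-\Pi)f\|^2$.

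Assumptions (H2) and (H3) do not depend on the particular collision operator. (H2) is equivalent, via $w=\rho\,e^{V/2}$, to a spectral gap for the Witten-type Schr\"odinger operator $-\Delta+\tfrac14|\nx V|^2-\tfrac12\Delta V$, furnished by (H2.1) as invoked in the preamble of Section~3; (H3) holds because $\T\Pi f=F\,v\cdot\nx u_f$ is odd in $v$. For the first half of (H4), boundedness of $\A\T(1-\Pi)$, I would invoke Lemma~\ref{lemma2} and Proposition~\ref{Lem:BK}: since $\rhoF$, $\mF$, $M_F$ are all constant multiples of $e^{-V}$, the weight conditions \eqref{V-ass1}--\eqref{H7} collapse to (H0.1) and (H4.1), as already checked in the preamble of Section~3.

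The core of the proof is the boundedness of $\A\L$ in the absence of detailed balance. Following the sketch in the excerpt, I would write $g=\A\L f=uF$, where the density $u$ satisfies the elliptic equation
\[u\,e^{-V}-\nx\!\cdot\!(e^{-V}\nx u)=-\nx\!\cdot\!j_{\L f}.\]
Testing against $u$ and integrating by parts gives
\[\intRd u^{2}\,e^{-V}\,dx+\intRd |\nx u|^{2}\,e^{-V}\,dx=\intRd \nx u\cdot j_{\L f}\,dx,\]
so Cauchy--Schwarz on the right-hand side reduces the problem to bounding $\intRd |j_{\L f}|^{2}\,e^{V}\,dx$ by a multiple of $\nrm{f}^{2}$. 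Splitting $\L=\L^{+}-\nu(v)\mathrm{Id}$ and applying Cauchy--Schwarz twice in $v$ (once inside the integrand, once against the weight $\BGprofile$ in the outer $v$-integral) yields
\[|j_{\L^{+}f}(x)|^{2}\le\Big(\iint_{\R^d\times\R^d}|v|^{2}k(v^*\to v)^{2}\,\tfrac{\BGprofile^*}{\BGprofile}\,dv^*\,dv\Big)\intRd\tfrac{|f(x,v^*)|^{2}}{\BGprofile^*}\,dv^*,\]
whose first factor is the $|v|^{2}$-half of (H4.2); a relabelled version of the same computation handles $j_{\nu f}$ using the $|v^*|^{2}$-half. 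Integrating in $x$ against $e^V$ then turns the second factor into $\nrm{f}^{2}$, hence $\nrm{\A\L f}\le C\,\nrm{f}$. I expect this $\A\L$ estimate to be the main obstacle, because without detailed balance $\A\L$ cannot be simplified as in the Fokker--Planck case, and the weights $\BGprofile^*/\BGprofile$ must line up precisely with the $L^2(d\mu)$ norm in order for (H4.2) to close the argument. With (H1)--(H4) all verified, Theorem~\ref{theo:abstract} delivers the claimed exponential decay in $L^2(d\mu)$.
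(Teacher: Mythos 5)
Your proposal is correct and follows essentially the same route as the paper: verification of (H1) via the H-theorem identity and (H1.1), of (H2)--(H3) and the $\A\,\T\,(1-\Pi)$ part of (H4) exactly as in the Maxwellian preamble of Section~3, and of the boundedness of $\A\,\L$ through the elliptic equation for $u$, the energy estimate, and the double Cauchy--Schwarz argument closed by (H4.2), with the loss term handled by the relabelled $|v^*|^2$-half just as the paper's ``analogous estimate'' indicates. No discrepancies to report.
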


\section{Linearized BGK operators}

\subsection{Motivation: nonlinear models}

Our motivation in this section comes from nonlinear BGK models with collision operators of the form
\[
Q(f) = \gamma\left( \frac 12\,|v|^2 - \omu(\rho_f)\right) - f\,.
\]
The operator is determined by the energy profile $\gamma(E)\ge 0$ which is assumed to be monotone decaying on $\gamma^{-1}(0,\infty)$. The (strictly increasing) function $\omu(\rho)$ is defined implicitly by the requirement of local mass conservation, i.e.
\[
\intRd \gamma\left( \frac 12\,|v|^2 - \omu(\rho) \right)dv = \rho\,.
\]
Global equilibria of the nonlinear equation $\partial_t f + \T\,f = Q(f)$ are given by $f_\infty(x,v) = \gamma(E(x,v) - \mu_\infty)$, where the constant $\mu_\infty$ is determined by the total mass, and the macroscopic equilibrium density $\rho_\infty$ by $\mu_\infty -V(x) = \omu(\rho_\infty(x))$. In this section, we shall investigate the linearized stability of these equilibria, leading to the linear equation \eqref{eq:base} with the linearized collision operator $\L = \Pi-1$ with
\[
F(x,v) = -\,\gamma'(E(x,v) - \mu_\infty)\,,\quad \rhoF(x) = \frac{1}{\omu'(\rho_\infty(x))}\,.
\]
Note that in the Maxwellian case $\gamma(E) = e^{-E}$, $\omu(\rho) = \log(\rho (2\,\pi)^{-d/2})$, the operator $Q$ is linear, and therefore equal to $\L$. This case has already been investigated in Section \ref{BGK-Maxwell}.

The macroscopic limit
\[
\partial_t \rho = \nx\cdot \big(\nx\nu(\rho) + \rho\,\nx V\big)\,,
\]
of the nonlinear equation is a drift-diffusion equation with nonlinear diffusivity $\diff(\rho) = \nu'(\rho) = \rho\,\omu'(\rho)$ (see \cite{DoMaOeSc} for a justification). Macroscopic limit and linearization commute in the sense that the linearization
\[
\partial_t \rho = \nx\cdot \big(\nx(\sigma(\rho_\infty)\,\rho) + \rho\,\nx V\big)
\]
of the macroscopic equation is the macroscopic limit of the linearized kinetic equation.

In the following section we consider a family of equilibrium energy distributions~$\gamma$, giving rise to nonlinear diffusions of fast diffusion type. As in Section \ref{BGK-Maxwell}, boundedness of $\L$ and microscopic coercivity are straightforward. Since $j_{\L f}=-j_f$, $\A\,\L$ is bounded, and $\L\,\A=0$ is easy to check. It remains to check the macroscopic coercivity condition and the boundedness of $\A\,\T\,(1-\Pi)$, corresponding to {\bf (H2)} and {\bf (H4)} respectively.

\subsection{Fast diffusion} The choice $\gamma(E) = E^{-d/2 - 1/(1-m)}$ and $m<1$ leads to
\[
\nu(\rho) = c\left\{ \begin{array}{ll} \rho^m & \mbox{for } m\ne 0\,,\\ \log\rho & \mbox{for } m=0\,, \end{array}\right.
\]
with a constant $c>0$ depending on $m$ and $d$. For $m<1$, we also compute the moments
\begin{eqnarray*} \label{FD-ass}
\rhoF = c_0 (V-\mu_\infty)^{-1-1/(1-m)}\,,\;&& \mF = c_1 (V-\mu_\infty)^{-1/(1-m)}\,,\\
&&M_F = c_2 (V-\mu_\infty)^{1-1/(1-m)}\,,\nonumber
\end{eqnarray*}
where the positive constants $c_0$, $c_1$, $c_2$ depend on $m$ and $d$. For the external potential we shall, for notational convenience, only consider the choice
\be{FD-V}
V(x)-\mu_\infty = (1+|x|^2)^\beta\,,\quad \beta > 0\,.
\ee
However, all our results are easily extendable to potentials whose asymptotic behaviour as $|x|\to\infty$ is given by \eqref{FD-V}. With these choices,
\par\medskip\noindent{\bf Assumption (H0.2)}\quad $\displaystyle{ \beta > \frac{d\,(1-m)}{2\,(2-m)}}$. \par\medskip
\noindent is necessary and sufficient for $\rhoF \in L^1(dx)$.

Macroscopic coercivity is related to Hardy-Poincar\'e inequalities. In \cite{BlBoDoGrVa,BlBoDoGrVa08,BDGV}, for any $d\ge 3$, $\alpha\neq \alpha^* := -(d-2)/2$ a positive constant $\mathcal C_{\alpha,d}$ is given explicitly, such that
\be{HardyPoincare}
\intRd |\nx u|^2\,(1+|x|^2)^{\alpha}\,dx \ge \mathcal C_{\alpha,d} \intRd u^2\,(1+|x|^2)^{\alpha-1}\,dx\,,
\ee
for all $u\in H^1\big((1+|x|^2 \big)^\alpha\,dx)$, under the additional condition $\intRd u\,(1+|x|^2)^{\alpha-1}\,dx=0$ if $\alpha<\alpha^*$, in which case the measure $(1+|x|^2)^{\alpha-1}\,dx$ is bounded. The Hardy-Poincar\'e inequality is equivalent to macroscopic coercivity {\bf (H2)} for $\beta=1$. A small generalization is even more useful for our purposes:
\begin{corollary}\label{Cor:BBDGV} Let $d\ge 3$, $\alpha_1 \ge \alpha_2 + 1$, and $\alpha_1 \ne \alpha^* := 1-d/2$ if $\alpha_1 = \alpha_2 + 1$. Let $w$ be a function such that $0 \le w(x) \le c\,(1+|x|^2)^{\alpha_2}$ for any $x\in\R^d$, for some $c>0$. Then there exists a positive constant $\mathcal K_{\alpha_1,\alpha_2,d}$ such that
\[
\label{HP} \intRd |\nx u|^2\,(1+|x|^2)^{\alpha_1}\,dx \ge \mathcal K_{\alpha_1,\alpha_2,d} \intRd u^2\,w\,dx
\]
for any $u$ such that $\intRd u\,w\,dx = 0$ if $\intRd w\,dx < \infty$. \end{corollary}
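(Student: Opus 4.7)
The plan is to reduce \eqref{HP} to the Hardy--Poincar\'e inequality \eqref{HardyPoincare} applied with an auxiliary exponent $\alpha_1'\in[\alpha_2+1,\alpha_1]\setminus\{\alpha^*\}$. Since $1+|x|^2\ge 1$, monotonicity of $\alpha\mapsto(1+|x|^2)^\alpha$ yields simultaneously
\[
\intRd|\nx u|^2(1+|x|^2)^{\alpha_1}\,dx\ge\intRd|\nx u|^2(1+|x|^2)^{\alpha_1'}\,dx
\]
and the pointwise bound $(1+|x|^2)^{\alpha_1'-1}\ge w/c$, the latter via $\alpha_1'-1\ge\alpha_2$ combined with the hypothesis $w\le c(1+|x|^2)^{\alpha_2}$. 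Such an $\alpha_1'$ exists under the assumptions: if $\alpha_1\ne\alpha^*$ take $\alpha_1'=\alpha_1$, while if $\alpha_1=\alpha^*$ the hypothesis forces $\alpha_1>\alpha_2+1$, leaving room to slide $\alpha_1'$ strictly below $\alpha_1$ within $[\alpha_2+1,\alpha_1)$.

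When $\alpha_1'\ge\alpha^*$, inequality \eqref{HardyPoincare} carries no orthogonality constraint, and the two comparisons above chain together to yield \eqref{HP} directly with $\mathcal K_{\alpha_1,\alpha_2,d}=\mathcal C_{\alpha_1',d}/c$. When $\alpha_1'<\alpha^*$, the measure $(1+|x|^2)^{\alpha_1'-1}\,dx$ is finite, so by domination $w\,dx$ is finite too, and the statement's orthogonality hypothesis $\intRd u\,w\,dx=0$ is in force. I would then set $\bar u:=\intRd u\,(1+|x|^2)^{\alpha_1'-1}\,dx\big/\intRd(1+|x|^2)^{\alpha_1'-1}\,dx$, apply \eqref{HardyPoincare} to $u-\bar u$ (whose gradient equals $\nx u$), and pass to the weight $w$ through the pointwise bound to obtain the lower bound $(\mathcal C_{\alpha_1',d}/c)\intRd(u-\bar u)^2\,w\,dx$. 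Expanding the square and invoking $\intRd u\,w\,dx=0$ gives
\[
\intRd(u-\bar u)^2\,w\,dx=\intRd u^2\,w\,dx+\bar u^2\intRd w\,dx\ge\intRd u^2\,w\,dx,
\]
which closes the second sub-case.

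The main obstacle I anticipate is the reconciliation between the two orthogonality conditions $\intRd u\,w\,dx=0$ and $\intRd u(1+|x|^2)^{\alpha_1'-1}\,dx=0$, the former coming from the statement and the latter demanded by \eqref{HardyPoincare} in the subcritical regime; the mean-subtraction trick resolves it cleanly because the discarded term $\bar u^2\intRd w\,dx$ has the favorable sign. The bookkeeping around the forbidden exponent $\alpha^*$ is routine once one notes that the assumption ``$\alpha_1\ne\alpha^*$ if $\alpha_1=\alpha_2+1$'' is exactly the hypothesis that leaves room for an auxiliary $\alpha_1'$ in the critical case.
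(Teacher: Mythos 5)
Your proposal is correct and follows essentially the same route as the paper: pick an admissible intermediate exponent in $[\alpha_2+1,\alpha_1]$ avoiding $\alpha^*$, apply the Hardy--Poincar\'e inequality \eqref{HardyPoincare} together with the pointwise comparison of weights, and in the subcritical case subtract the weighted mean and use $\intRd u\,w\,dx=0$ to drop the nonnegative term $\bar u^2\intRd w\,dx$ (the paper phrases this last step as $\inf_{\mu}\intRd|u-\mu|^2\,w\,dx=\intRd u^2\,w\,dx$, which is the same computation).
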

\begin{proof} The assumptions on $\alpha_1$ and $\alpha_2$ allow to choose $\alpha\ne\alpha^*$ with $\alpha_2+1\le \alpha \le \alpha_1$. Then Theorem 1 in \cite{BlBoDoGrVa} implies
\begin{eqnarray*}
&& \intRd |\nx u|^2(1+|x|^2)^{\alpha_1}\,dx \ge\intRd |\nx u|^2(1+|x|^2)^\alpha\,dx \\
&&\ge \mathcal C_{\alpha,d} \intRd |u^\bot|^2 (1+|x|^2)^{\alpha-1}\,dx \ge \mathcal C_{\alpha,d} \intRd |u^\bot|^2\,w\,dx\,,
\end{eqnarray*}
where $u = \bar u + u^\bot$ with $\bar u =0$ for $\alpha>\alpha^*$, and $\bar u = Const$ and $\intRd u^\bot(1+|x|^2)^{\alpha-1}dx=0$ for $\alpha<\alpha^*$. This completes the proof for $\alpha>\alpha^*$. Otherwise, $\intRd w\,dx<\infty$ holds, and the right hand side can be estimated as follows:
\begin{eqnarray*}
\intRd |u-\bar u|^2\,w\,dx \ge \inf_{\mu\in\R}\intRd |u-\mu|^2\,w\,dx = \intRd u^2\,w\,dx\,,
\end{eqnarray*}
using the side condition $\intRd u\,w\,dx = 0$.
\end{proof}

To get examples, where macroscopic coercivity holds in the fast diffusion case, we apply Corollary~\ref{Cor:BBDGV} with $\alpha_1=-\beta/(1-m)$ and $\alpha_2=-\beta\,(2-m)/(1-m)$. Then
\par\medskip\noindent{\bf Assumption (H2.2)} \quad $d\ge 3$, $\beta \ge 1$, and $m\neq (d-4)/(d-2)$\par\medskip
\noindent implies {\bf (H2)}. Note that the last condition is needed only for $\beta=1$. It will however be useful in the following. For proving the boundedness of $\A\,\T\,(1-\Pi)$, a modified version of the framework of Section \ref{sec:regularity} can be used. Redefining
\[
W := (1+|x|^2)^{(\beta - 1)/2}\,,
\]
$\frac{\nx w_1}{w_0}\le c\,W$ holds with the notation of Section \ref{sec:regularity}. The result of Lemma \ref{lem-PMhati}:
\be{HP-improved}
\| W\,\nx u \|_1^2 \ge \kappa'\,\Big\| W^2\,u \Big\|_0^2\,,
\ee
is a direct consequence of the Hardy-Poincar\'e inequality with $\alpha = \beta - 1 + \beta/(m-1)$. Note that we have to require that $\alpha$ is different from $\alpha^*$. The proof of Lemma \ref{lem-energy-improved} uses Assumption \eqref{V-ass2}, which would together with {\bf (H2.2)} require $\beta=1$ and therefore by {\bf (H0.2)} $m>(d-4)/(d-2)$. This can be slightly improved by redoing the proof of Lemma \ref{lem-energy-improved}.
\begin{lemma} \label{lem-FD-ellipticity} Let {\bf (H2.2)} hold. Then there exists a constant $\beta_0 > 1$, depending on $m$ and $d$, such that for $\beta<\beta_0$ the operator $\A\,\T\,(1-\Pi)$ is bounded. \end{lemma}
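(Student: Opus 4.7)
The plan is to mimic the $L^2$–$H^1$–$H^2$ cascade used in Section~\ref{sec:regularity}, culminating in an adapted version of the proof of Proposition~\ref{Lem:BK}, but with the weight redefined as $W=(1+|x|^2)^{(\beta-1)/2}$. With this choice $W\ge 1$ and, by the polynomial expressions of $\rhoF$ and $\mF$ recalled in \eqref{FD-ass}, $|\nx w_1|/w_0\le c\,W$, so that the improved Hardy–Poincar\'e inequality \eqref{HP-improved} holds under {\bf (H2.2)} (the exclusion $m\ne(d-4)/(d-2)$ at $\beta=1$, and Assumption~{\bf (H0.2)} more generally, is exactly what ensures $\alpha=\beta-1+\beta/(m-1)\ne\alpha^*$). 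From \eqref{u-equ1} tested against $u$ I would first record the baseline estimate $\|u\|_0^2+\|\nx u\|_1^2\le\|u_f\|_0^2$.

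Second, I would revisit the proof of Lemma~\ref{lem-energy-improved}. Testing \eqref{u-equ1} against $u\,W^2$ gives the identity \eqref{energy-inequality}. The new feature is that Assumption~\eqref{V-ass2} fails for $\beta>1$, so the remainder $I:=\intRd w_1^2\,u\,\nx u\cdot\nx(W^2)\,dx$ has to be handled by direct computation: using $w_1/w_0\propto(1+|x|^2)^{\beta/2}$ and $|\nx(W^2)|\le2(\beta-1)(1+|x|^2)^{\beta-3/2}$, one gets $w_1\,|\nx(W^2)|/w_0\le C\,(\beta-1)\,W^3$, so Cauchy–Schwarz yields $|I|\le C\,(\beta-1)\,\|W\,\nx u\|_1\,\|W^2\,u\|_0$. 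Feeding \eqref{HP-improved} into both this bound and the right-hand side term $\|u\,W^2\|_0\,\|u_f\|_0$, the identity \eqref{energy-inequality} becomes
\[
\|u\,W\|_0^2 + \|W\,\nx u\|_1^2 \le \frac{1}{\sqrt{\kappa'}}\,\|u_f\|_0\,\|W\,\nx u\|_1 + \frac{C\,(\beta-1)}{\sqrt{\kappa'}}\,\|W\,\nx u\|_1^2\,.
\]
For $\beta$ sufficiently close to $1$ the last term can be absorbed into the left-hand side, and Young's inequality then gives $\|W\,\nx u\|_1\le C\,\|u_f\|_0$. This fixes a first threshold $\beta<\beta_1$.

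Third, I would reprove the $H^2$-estimate in the spirit of Proposition~\ref{Lem:BK} by testing \eqref{u-equ1} against $\nx\cdot(w_1^2\,w_0^{-2}\,\nx u)$ and integrating twice by parts. The five integrals $I_1,\dots,I_5$ appearing have the same shape as in the proof of Proposition~\ref{Lem:BK}, but the gradients $\nx(w_1^2)$ and $\nx(w_1^2/w_0^2)$ must be estimated directly from the polynomial formulas \eqref{FD-ass}, and yield factors of $(\beta-1)\,W^2\,w_1^2/w_0^2$ instead of $c_4\,|\nx w_1|/w_0$. Each term can then be dominated by a product involving $\|\nx^2u\|_2$, the improved $H^1$ quantity $\|W\,\nx u\|_1$ (controlled by step two), and $\|u_f\|_0$, with prefactors growing like $(\beta-1)$. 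Absorbing the $\|\nx^2u\|_2$ contributions into the left-hand side produces the desired estimate $\|\nx^2u\|_2\le K\,\|u_f\|_0$ provided $\beta<\beta_2$ for a second explicit threshold. Setting $\beta_0:=\min(\beta_1,\beta_2)>1$ and invoking Lemma~\ref{lemma2} concludes.

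The main obstacle is the bookkeeping of constants: unlike in Section~\ref{sec:regularity}, where Assumptions~\eqref{V-ass1}–\eqref{V-ass3} provide clean algebraic relations between the weights, here every auxiliary inequality involves a prefactor of $(\beta-1)$ coming from $\nx W$ and $\nx(W^2)$, weighed against the constant $\kappa'$ of \eqref{HP-improved} which itself depends on $\beta$ and $m$. The existence of $\beta_0>1$ is then a continuity argument: at $\beta=1$ one has $W\equiv1$, all perturbative terms vanish, and the standard Section~\ref{sec:regularity} machinery applies; the threshold $\beta_0$ measures how far this situation can be perturbed while still closing the energy loop.
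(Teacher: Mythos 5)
Your proposal is correct and follows essentially the paper's own route: redefine $W=(1+|x|^2)^{(\beta-1)/2}$, obtain \eqref{HP-improved} from the Hardy--Poincar\'e inequality under {\bf (H2.2)}, redo the proof of Lemma~\ref{lem-energy-improved} using $w_1^2\,|\nx(W^2)|\le 2\,(\beta-1)\,w_0\,w_1\,W^3$ and absorb the $(\beta-1)$-term thanks to the continuity of $\kappa'$ in $\beta$ at $\beta=1$, then run the proof of Proposition~\ref{Lem:BK} and conclude with Lemma~\ref{lemma2}. The only (harmless) deviation is your second threshold $\beta_2$: the paper simply notes that \eqref{V-ass3} holds for these weights so Proposition~\ref{Lem:BK} applies verbatim, and indeed no further smallness of $\beta-1$ is needed at the $H^2$ stage, since $\nx(w_1^2/w_0^2)\propto\beta\,x\,(1+|x|^2)^{\beta-1}$ does not carry the $(\beta-1)$ factor you claim but the $\|\nx^2 u\|_2$-contributions enter only linearly in the final quadratic inequality, which closes for any finite constant.
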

\begin{proof} As mentioned above, \eqref{HP-improved} follows from \cite[Theorem 1]{BlBoDoGrVa}. Since, by {\bf (H2.2)}, $1/(m-1)\ne\alpha^*$, also $\beta-1 + \beta/(m-1)\ne\alpha^*$, if $\beta$ is close enough to 1. According to \cite{BDGV}, the explicit expression of the constant $\mathcal C_{\alpha,d}$ in \eqref{HardyPoincare} is a positive, continuous function of~$\alpha$ for $\alpha<\alpha^*$ and $\alpha>\alpha^*$ and $\kappa'= \kappa'(\beta,m,d)>0$ in \eqref{HP-improved} can be chosen to be continuous with respect to $\beta$ at $\beta=1$.

As in the proof of Lemma \ref{lem-energy-improved}, we derive the inequality \eqref{energy-inequality} and use
\[
w_1^2\,|\nx (W^2)| \le 2\,(\beta-1)\,w_0\,w_1\,W^3
\]
to estimate the last term by $2\,(\beta-1)\,\| u\,W^2\|_0\,\|W\,\nx u\|_1$. With the help of \eqref{HP-improved}, \eqref{energy-inequality} implies
\[
\|W\,\nx u\|_1^2 \le \frac{1}{\kappa'}\,\|W\,\nx u\|_1 \Big( \|u_f\|_0 + 2\,(\beta-1)\,\|W\,\nx u\|_1 \Big)\,.
\]
By the continuity of $\kappa'$, there exists $\beta_0(m,d) > 1$ such that $2\,(\beta-1)/\kappa'<1$ for $1\le\beta< \beta_0$. or such a $\beta$ the result of Lemma \ref{lem-energy-improved} follows. This allows to carry out the proof of Proposition~\ref{Lem:BK}, since Assumption \eqref{V-ass3}, which is used there, is satisfied. \end{proof}

As a consequence of this result, we formulate
\par\medskip\noindent{\bf Assumption (H4.3)} \quad $\beta<\beta_0$ with $\beta_0(m,d)$ from Lemma \ref{lem-FD-ellipticity}.\par\medskip
\begin{theorem}\label{Thm:NLBGK} With the above notations, let $\L = \Pi-1$, and assume that {\bf (H0.2)}, {\bf (H2.2)}, and {\bf (H4.3)} hold. Then solutions of \eqref{eq:base} with initial data in $L^2(d\mu)$ decay exponentially to the global equilibrium given by
\[
F(x,v) = \left( \frac 12\,|v|^2 + V(x)\right)^{-d/2 - 1/(1-m) - 1} \,,\quad V(x) = (1+|x|^2)^\beta\,,
\]
\end{theorem}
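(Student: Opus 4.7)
The strategy is to verify each of the four abstract hypotheses {\bf (H1)}--{\bf (H4)} of Theorem~\ref{theo:abstract} in the concrete linearized fast-diffusion BGK setting, working on $\mathcal H=\{f\in L^2(d\mu):\iint_{\R^d\times\R^d} f\,dv\,dx=0\}$ with $\L=\Pi-1$, and then apply the abstract theorem verbatim. First, {\bf (H0)} amounts to $\rhoF\in L^1(dx)$: from the explicit moment formula $\rhoF=c_0(V-\mu_\infty)^{-1-1/(1-m)}$ and the asymptotics $V(x)\sim|x|^{2\beta}$, integrability at infinity gives exactly the lower bound $\beta>d(1-m)/(2(2-m))$ of {\bf (H0.2)}. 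Microscopic coercivity {\bf (H1)} is trivial for the relaxation operator: $-\scalar{\L f}{f}=\|(1-\Pi)f\|^2$, so $\lambda_m=1$.

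For the macroscopic coercivity {\bf (H2)}, by Lemma~\ref{eq:macroSG} I would reduce it to the weighted Poincar\'e inequality $\intRd|\nx u|^2\mF\,dx\ge\lambda_M\intRd u^2\rhoF\,dx$ on zero-mean $u$. Using the power-law decay $\mF\sim(1+|x|^2)^{-\beta/(1-m)}$ and $\rhoF\sim(1+|x|^2)^{-\beta(2-m)/(1-m)}$, I apply Corollary~\ref{Cor:BBDGV} with $\alpha_1=-\beta/(1-m)$ and $\alpha_2=-\beta(2-m)/(1-m)$. The gap condition $\alpha_1\ge\alpha_2+1$ becomes $\beta\ge 1$, and the exclusion $\alpha_1\ne\alpha^*=1-d/2$ reduces (at $\beta=1$) to $m\ne(d-4)/(d-2)$; combined with $d\ge 3$, these are precisely the hypotheses gathered in {\bf (H2.2)}, so the corollary produces the required $\lambda_M>0$.

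Assumption {\bf (H3)} is a general fact in the kinetic setting: $\T\,\Pi f = F\,v\cdot\nx(\rho_f/\rhoF)$ is odd in $v$, hence has vanishing $v$-average and $\Pi\,\T\,\Pi=0$. For {\bf (H4)}, the two pieces have already been prepared in the text. Using skew symmetry of $\T$ to rewrite $\A=-\bigl(1+(\T\Pi)^*(\T\Pi)\bigr)^{-1}\Pi\,\T$, and noting that $\L f = \Pi f - f$ satisfies $j_{\L f}=-j_f$ (since $\Pi f$ has zero flux), one obtains $\Pi\,\T\,\L f = -\Pi\,\T f$ and thus $\A\,\L = -\A$, which is bounded by Lemma~\ref{lemma1}. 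Boundedness of $\A\,\T\,(1-\Pi)$ is exactly Lemma~\ref{lem-FD-ellipticity} under {\bf (H4.3)} (i.e.\ $\beta<\beta_0$). All four assumptions being in place, Theorem~\ref{theo:abstract} yields $\|e^{t(\L-\T)}f_I\|\le Ce^{-\lambda t}\|f_I\|$, which is the claimed exponential decay to $0$ in $L^2(d\mu)$ for zero-mean data, and hence to the global equilibrium in the general case.

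The only substantive obstacle in this verification is the elliptic regularity behind {\bf (H4)}, since the weights are no longer simple Gaussians and the standard argument of Section~\ref{sec:regularity} would require $\beta=1$ with $m>(d-4)/(d-2)$; however, this has been handled upstream by Lemma~\ref{lem-FD-ellipticity} via the continuity of the Hardy--Poincar\'e constant $\kappa'(\beta,m,d)$ at $\beta=1$, together with the definition of $\beta_0$ used in {\bf (H4.3)}. At the level of the theorem itself, the remaining work is therefore bookkeeping matching each concrete assumption to its abstract counterpart.
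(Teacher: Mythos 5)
Your proposal is correct and follows essentially the same route the paper takes: verify \textbf{(H1)}--\textbf{(H4)} in the concrete linearized fast-diffusion BGK setting and then invoke Theorem~\ref{theo:abstract}. All the pieces you assemble---$\lambda_m=1$ from $\L=\Pi-1$, the reduction of \textbf{(H2)} to the weighted Poincar\'e inequality and Corollary~\ref{Cor:BBDGV} with $\alpha_1=-\beta/(1-m)$, $\alpha_2=-\beta(2-m)/(1-m)$, oddness of $\T\Pi f$ for \textbf{(H3)}, the identity $\A\,\L=-\A$ via $j_{\L f}=-j_f$, and Lemma~\ref{lem-FD-ellipticity} for $\A\,\T\,(1-\Pi)$---are exactly the ingredients the paper relies on in Section~4, and you have correctly identified that the only non-routine step is the elliptic estimate handled by the continuity of the Hardy--Poincar\'e constant near $\beta=1$.
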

This result is, to our knowledge, the first hypocoercivity result for kinetic equation whose Gibbs state does not separate position and velocity variables.

\medskip\par\noindent\emph{Acknowledgements.\/} This work has been partially supported by the French-Austrian Amadeus project no. 13785UA, the ANR projects EVOL and CBDif-Fr, the Austrian Science Fund (project no. W8) and the European network DEASE. C.M. thanks the Cambridge University for hospitality and acknowledges support from Award No. KUK-I1-007-43, funded by the King Abdullah University of Science and Technology (KAUST).

\par\noindent{\scriptsize\copyright\ 2010 by the authors. This paper may be reproduced, in its entirety, for non-commercial purposes.}

\bibliographystyle{amsplain}\bibliography{References}
\end{document}